\def\shadowbox{\hbox{\rule[-0.0ex]{0.1ex}{1.2ex}%
\hspace{-0.1ex}\rule[-0.0ex]{1.2ex}{0.1ex}%
\hspace{0.0ex}\rule[-0.0ex]{0.1ex}{1.2ex}\hspace{-1.3ex}%
\rule[1.15ex]{1.25ex}{0.1ex}\hspace{-0.0ex}\rule[-0.25ex]{0.3ex}{1.1ex}%
\hspace{-1.2ex}\rule[-0.25ex]{1.1ex}{0.25ex}}}
\def\qed{\ifmmode \hbox{\hfill\shadowbox}
     \else \hphantom{x}\hfill\shadowbox \fi}
\newtheorem{theorem}{Theorem}[section]
\newtheorem{lemma}[theorem]{Lemma}
\newtheorem{definition}[theorem]{Definition}
\newtheorem{proposition}[theorem]{Proposition}
\newtheorem{corollary}[theorem]{Corollary}
\newtheorem{note}[theorem]{Note}
\def\R {\mathbb{R}}
\def\C {\mathbb{C}}
\def\P {\mathbb{P}}
\def\Fn {F_{n}}
\def\Fnstar {F_{n}^{*}}
\def\Vn {V_{n}}
\def\Ub {\mathcal{U}}
\def\Ubn {\mathcal{U}_{n}}
\def\Ubnstar {\mathcal{U}_{n}^{*}}
\def\Fstar {F^{*}}
\def\Vstar {V^{*}}
\def\Pn {P_{n}}
\def\Qn {Q_{n}}
\def\On {\Omega_{n}}
\def\Tn {T_{n}}
\def\Un {U_{n}}
\def\Ubn {\mathcal{U}_n}
\def\Wn {W_{n}}
\def\Ws {W^{*}}
\def\wi {w_{i}}
\def\wis {w_{i}^{*}}
\def\wiss {w_{i^*}^{*}}
\def\wj {w_{j}}
\def\wjs {w_{j}^{*}}
\def\Aii {A_{i}^{-1}}
\def\tr {\textnormal{tr}}
\def\E {\mathbb{E}}
\def\a {(2(p+q)-4pq)}
\def\b {(p-q)^2}
\def\EXP {\frac{\sqrt{(1-\frac{r_{-}}{x})(\frac{r_{+}}{x} -1 )}}{2\pi(1-x)}}
\def\FOT {F_{\Omega T}}
\def\FOnTn {F_{\Omega_n T_n}}
\def\UbOT {\mathcal{U}_{\Omega T}}
\def\UOT {U_{\Omega T}}
\def\UOnTn {U_{\Omega_n T_n}}
\def\UbOnTn {\mathcal{U}_{\Omega_n T_n}}
\def\etapq {\eta_{p,q}}
\def\Qt {\tilde{Q}}
\def\Qb {\bar{Q}}
\def\Pb {\bar{P}}
\def\Qtistar {\tilde{Q}_{i^*,i^*}}
\def\Pt {\tilde{P}}
\def\supp {\textnormal{supp}}
\def\iistar {(i^*,i^*)}
\begin{document}

\title{\bf Limiting Empirical Singular Value Distribution of Restrictions of Unitary Matrices  }



\author{Brendan Farrell
\thanks{   Heinrich-Hertz-Lehrstuhl f\"ur Informationstheorie und Theoretische Informationstechnik,
Technische Universit\"at Berlin,
Einsteinufer 25, 10587 Berlin, Germany.
\newline email: farrell@mk.tu-berlin.de}}




\date{\today}

\maketitle


\begin{abstract}
We determine the limiting empirical singular value distribution for 
random unitary matrices with Haar distribution and discrete Fourier transform (DFT) 
matrices when a random set of columns and rows is removed.
\end{abstract}







\begin{section}{Introduction}

An $n\times n$ Hermitian matrix $A$ determines a distribution on the real line by 
\begin{equation*}
f_A(x)=\frac{1}{n}\sum_{i=1}^n \delta_{\lambda_i}(x),
\end{equation*}
where $\lambda_1,...,\lambda_n$ are the eigenvalues of $A$. 
Wigner was the first to determine the limiting behavior of such a distribution when the matrix $A$ is random~\cite{Wig55}. 
He initially considered symmetric matrices with $0$'s on the diagonal and independent plus or minus $1$'s in the upper-triangle
and showed that when scaled by $\frac{1}{\sqrt{n}}$ the empirical distribution  
converges in probability to the \emph{Semicircular Law}
\begin{equation*}
f_W(x)= \Big\{
\begin{array}{cl}
\frac{1}{2\pi}\sqrt{4-x^2}& \textnormal{when }\;|x|\leq 2\\
0&\textnormal{otherwise }\hspace{1cm}.
\end{array}
\end{equation*}
Wigner later proved that the same limiting distribution holds for symmetric random variables with finite moments~\cite{Wig58}. 

The second classical type of random matrix is the Wishart matrix~\cite{Wis28}. 
Let $H\in \C^{m\times n}$ have independent Gaussian entries with variance $\frac{1}{n}$. 
Then $HH^*$ is called a Wishart matrix. 
Mar\v cenko and Pastur showed that the empirical distribution of $HH^*$ converges in probability to 
\begin{equation*}
f_{HH^*}(x)=\max(0,1-\frac{n}{m})\delta(x) +\frac{\sqrt{(x-c_-)(c_+-x)}}{2\pi x}\cdot I_{[c_,c_+]},
\end{equation*}
where $c_\pm =(1-\sqrt{\frac{m}{n}})^2$ \cite{MP67}. 
Independently, Silverstein and Grenander used a similar technique and proved almost sure convergence~\cite{GS77}. 

This paper applies the approach of Mar\v cenko and Pastur to a question originating in geometric functional analysis: 
we address the singular values of random submatrices of random unitary matrices with Haar distribution and of discrete 
Fourier transform (DFT) matrices when a random subset of columns and rows is removed 
and determine their limiting empirical singular value distribution. 
Let $\Ubn$ denote the $n\times n$ random unitary matrix with Haar distribution, and $\Un$ a realization of this random matrix. 
Let $T$ and $\Omega$ be subsets of $\{1,...,n\}$. 
We define $\UOT$ to be the matrix obtained from $\Un$ by removing rows with indices not in $\Omega$ and columns with indices not in $T$.
The $n\times n$ DFT matrix has entries
\begin{equation*}
F_{jk}=\frac{1}{\sqrt{n}}e^{-2\pi i(j-1)(k-1)/n}.
\end{equation*}
$F$ is unitary, and we define $\FOT$ analogously to $\UOT$. 

We show that when each index is included in $\Omega$ independently with probability $(1-q)$ and in $T$ independently with probability $(1-p)$, 
then the limiting empirical distribution of $\UbOT\UbOT^*$ and $ \FOT\FOT$ depends only on the parameters $p$ and $q$ and converges 
almost surely to 
\begin{small}\begin{equation*}
f_{p,q}(x)=\frac{\sqrt{(1-\frac{r_{-}}{x})(\frac{r_{+}}{x}-1)}}{2\pi(1-x)(1-\max(p,q))}\cdot I_{(r_{-},r_{+})}(x)+\frac{\max(0,1-(p+q))}{1-\max(p,q)}\cdot\delta(x-1),\label{thedistr}
\end{equation*}\end{small}
where
\begin{equation*}
r_{-}=(\sqrt{p(1-q)}-\sqrt{q(1-p)})^2
\end{equation*}
and
\begin{equation*}
r_{+}=(\sqrt{p(1-q)}+\sqrt{q(1-p)})^2.
\end{equation*}
This is formally stated as Theorem~\ref{first}.

The eigenvalue distribution of random unitary matrices with a fixed proportion of 
the bottom rows and right-most columns removed has already been studied. 
In the case when these proportions are equal, i.e. when the resulting matrix is square, 
the limiting empirical  eigenvalue density was derived in~\cite{ZS00}, 
which builds on the work in~\cite{PR05}.


The interest in the spectrum of these matrices from the perspective of geometric functional analysis is in the largest and smallest eigenvalues of 
$\UbOT\UbOT^*$ and $ \FOT\FOT$, initially perhaps asymptotically, but ideally for finite dimension. 
These eigenvalues are related to discrete uncertainty principles, as well as random projections and embeddings. 
More discussion of their significance is given following Theorem~\ref{first}. 
The first works on the extremal eigenvalues in the Wishart case were \cite{Gem80,YBK88,BY93}, 
and in the Wigner case it was~\cite{BY88}. 
It is important to note that the limiting empirical distributions were determined in these two cases 
before the behavior of the extremal eigenvalues was proved and were essential in that effort. 
We hope that the distribution presented here leads to similar developments. 


\begin{subsection}{Notation}
To make notation easiest, a single subscript will denote the dimension of a square matrix, 
while a double index will refer to an entry of the matrix. 
Thus $\Fn$ will denote the $n$-dimensional DFT matrix, 
and 
\begin{equation*}
F_{jk}=\frac{1}{\sqrt{n}}e^{-2\pi i(j-1)(k-1)/n}
\end{equation*}
will denote its entry at index $(j,k)$. 
$\Ubn$ will denote the random unitary matrix with Haar distribution and of dimension $n$ and 
$\Un$ will denote a realization of this random variable.

When we want to make the original dimension apparent, we write $\FOnTn$. 
$\UbOT$ and $\UbOnTn$ denote the analogous random variables for the Haar case. 
We will use $V$ to denote an arbitrary unitary matrix. 

We find it helpful to also work with matrices with rows and columns set to zero rather than removed. 
For clarity we make the following definitions.
\begin{definition}
A square matrix is called a \emph{diagonal projection matrix} 
if its off-diagonal entries are all zero and its diagonal entries are 
zero or one.
\end{definition}
\begin{definition}
A random diagonal projection matrix will be called a \emph{Bernoulli} diagonal projection 
matrix if the diagonal entries are independent and equal to $1$ with probability $1-p$ and 
equal to $0$ with probability $p$. 
\end{definition}

The matrices $\Pn$ and $\Qn$ will denote independent Bernoulli 
diagonal projection matrices. 
Asymptotically, $\Pn$ and $\Qn$ randomly ``erase'' the percentage $p$ and $q$ respectively of a vector.
For a matrix $A$, $A^{*}$ denotes the conjugate transpose of $A$.

\begin{note}
Throughout this paper we take the square root of a complex number 
to be uniquely defined by having argument in $[0,\pi)$. 
The reader will see that this is justified.
\end{note} 

\begin{note}\label{notoz}
When either $p$ or $q$ is $0$ or $1$, the corresponding matrix is trivial. 
For the convergence of several sums in later proofs, we assume that $p,q\in(0,1)$.
\end{note} 

\end{subsection}

\end{section}

\begin{section}{The Stieltjes and $\eta$ Transforms}
Our main tool is the Stieltjes transform, which is only defined for real random variables. 
Thus, we will determine the limiting eigenvalue distribution of 
\begin{equation*}
\Pn\Fn\Qn\Fnstar\Pn,\label{matrix}
\end{equation*}
which of course is real and contained in  $[0,1]$. 

The Stieltjes transform of a real random variable $X$ with distribution function $F_{X}(x)$ 
is a function 
$m_{X}:\C^{+}\rightarrow \R$ defined by 
\begin{equation*}
m_{X}(z)=\E_{X}\left[\frac{1}{X-z}\right].
\end{equation*}
If $F_X$ is continuous at $x$, then $f_X(x)$ can be recovered by the Stieltjes inversion formula~\cite{AGZ09} 
\begin{equation}
f_{X}(x)=\frac{1}{\pi}\lim_{\omega\rightarrow 0} \Im\; m_{X}(x+i\omega)\label{invstieltjes}.
\end{equation}
We will determine the Stieltjes transforms of $\Pn\Fn\Qn\Fnstar\Pn$ and $\Pn\Ubn\Qn\Ubnstar\Pn$ by first using the 
$\eta$-transform, which was introduced by Tulino and Verd\'u in~\cite{TV04}.
For a real valued random variable $X$, the $\eta$-transform is also a 
function 
$\eta_{X}:\C^{+}\rightarrow \R$ defined by 
\begin{equation*}
\eta_{X}(z)=\E_{X}\left[\frac{1}{1+zX}\right].
\end{equation*}
Note that for $z$ in an appropriate region of convergence
\begin{equation*}
m_{X}(z)=-\frac{1}{z}\sum_{k=0}^{\infty}(z)^{-k}\E[X^{k}]
\end{equation*}
and 
\begin{equation}
\eta_{X}(z)=\sum_{k=0}^{\infty}(-z)^{k}\E[X^{k}]\label{sumforeta},
\end{equation}
so that 
\begin{equation}
m_{X}(z)=-\frac{1}{z}\eta_{X}(-\frac{1}{z})\label{transforms}.
\end{equation}

In this section we determine the $\eta$-transform for the matrices 
$\Pn\Fn\Qn\Fnstar\Pn$ and $\Pn\Ubn\Qn\Ubnstar\Pn$, Proposition~\ref{isequiv}. 
We require several lemmas en route to this proposition. 
\begin{lemma}\label{ld}
Let $\Pb$ be a mean-zero, random diagonal matrix with independent entries in $[-1,1]$ and of dimension $n$. 
Then there exists a constant $C_m$ such that
for dimension $n$ and all $1\leq i,j\leq n$,
\begin{equation*}
 \E |F^*\Pb F|_{i,j}^m\leq  C_m  n^{-m/2}. 
\end{equation*}
The constant $C_m$ increases with $m$. 
\end{lemma}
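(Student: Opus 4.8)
The plan is to expand the $(i,j)$ entry of $F^*\bar P F$ explicitly in terms of the entries of $\bar P$, take the $m$-th moment, and exploit the independence and mean-zero property of the diagonal entries to show that only a small fraction of the terms in the resulting sum survive. Writing $\bar P=\mathrm{diag}(d_1,\dots,d_n)$ with the $d_k$ independent, mean zero, and supported in $[-1,1]$, we have
\begin{equation*}
(F^*\bar P F)_{i,j}=\frac{1}{n}\sum_{k=1}^n d_k\, e^{2\pi i (i-1)(k-1)/n} e^{-2\pi i (k-1)(j-1)/n}=\frac{1}{n}\sum_{k=1}^n d_k\, \omega_k,
\end{equation*}
where each $\omega_k$ is a unit-modulus complex number depending only on $i,j,k,n$. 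Hence
\begin{equation*}
\E\,|(F^*\bar P F)_{i,j}|^m = \frac{1}{n^m}\sum_{k_1,\dots,k_m} \E\big[d_{k_1}\cdots d_{k_m}\big]\cdot(\text{unit-modulus factor}),
\end{equation*}
possibly after pairing $|z|^m=(z\bar z)^{m/2}$ so that exactly $m$ indices appear (for $m$ even; for odd $m$ one bounds $\E|z|^m\le (\E|z|^{m'}\E|z|^{m''})^{1/2}$ with $m',m''$ even, or uses $|z|^m \le 1 + |z|^{2\lceil m/2\rceil}$ together with the even case, so it suffices to treat $m$ even).

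The key combinatorial point is that, by independence and $\E d_k=0$, the expectation $\E[d_{k_1}\cdots d_{k_m}]$ vanishes unless every value among $k_1,\dots,k_m$ occurs at least twice. The number of such index tuples in $\{1,\dots,n\}^m$ is $O(n^{m/2})$: choosing at most $m/2$ distinct values and then a pattern of how the $m$ slots are filled contributes a factor $n^{m/2}$ times a constant depending only on $m$ (the number of partitions of an $m$-set into blocks of size $\ge 2$, times the number of ways to assign distinct values, bounded by $n^{\lfloor m/2\rfloor}$). Since $|d_k|\le 1$ gives $|\E[d_{k_1}\cdots d_{k_m}]|\le 1$ and each unit-modulus factor has absolute value $1$, we get
\begin{equation*}
\E\,|(F^*\bar P F)_{i,j}|^m \le \frac{1}{n^m}\cdot C_m\, n^{m/2} = C_m\, n^{-m/2},
\end{equation*}
with $C_m$ a purely combinatorial constant (a weighted count of partitions of $\{1,\dots,m\}$ with no singleton blocks), which indeed grows with $m$.

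The main obstacle, and essentially the only nontrivial step, is the bookkeeping in the combinatorial count: organizing the sum over $(k_1,\dots,k_m)$ by the partition of $\{1,\dots,m\}$ induced by equal indices, discarding every partition that has a singleton block, and verifying that the surviving partitions contribute at most $n^{m/2}$ up to a constant. The odd-$m$ case requires the small extra argument indicated above (reduce to an even moment via Cauchy--Schwarz or a crude bound), but this introduces no new ideas. Everything else — the explicit entrywise expansion and the trivial bounds $|\omega_k|=1$, $|d_k|\le 1$ — is routine, and the uniformity in $i,j$ is automatic because the unit-modulus factors were bounded by $1$ regardless of $i,j$.
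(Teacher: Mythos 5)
Your proof is correct, but it follows a genuinely different route than the paper's. The paper observes that each entry is $\frac{1}{n}\sum_k \bar P_{k,k}\,\omega_k$ for unit-modulus $\omega_k$, splits into real and imaginary parts, applies Hoeffding's inequality to get the subgaussian tail $\P\left(\bigl|\tfrac{1}{n}\sum_k \bar P_{k,k}\omega_k\bigr|>t\right)\le 4e^{-t^2n/4}$, and then integrates $\E|X|^m=\int_0^\infty ms^{m-1}\P(|X|\ge s)\,ds$ to recover all moments at once with an explicit $C_m$ of order $m\,2^m\,\Gamma(m/2)$. You instead expand $|\cdot|^m$ (for even $m$) into a multi-sum over index tuples, invoke independence and centering to kill any tuple in which some index occurs only once, and count the surviving tuples as $O(n^{m/2})$ by grouping according to the partition of $\{1,\dots,m\}$ into blocks of size at least two. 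Both arguments are sound and give the same power of $n$; your moment-method version is more elementary (no concentration inequality needed) and generalizes in the direction of trace/moment computations, while the paper's Hoeffding route gives a single clean tail estimate from which all $m$ follow uniformly and makes the dependence of $C_m$ on $m$ fully explicit. Your reduction of odd $m$ to even $m$ (via H\"older or $|z|^m\le 1+|z|^{2\lceil m/2\rceil}$) is the right fix and costs nothing.
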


\begin{proof}
Since $F^*\Pb F$ is Toeplitz we look at $(F^*\Pb F)_{0,l}=\frac{1}{n}\sum_{k=1}^{n} \Pb_{k,k}e^{2\pi i (k-1)(l-1)/n}$. 
Set $Y_k=\Re \Pb_{k,k}e^{2\pi i (k-1)(l-1)/n}$ and $Z_k= \Im\Pb_{k,k}e^{2\pi i (k-1)(l-1)/n}$. 
The $Y_k$, $k=1,...,n$, are independent random variables,
and $-1\leq Y_k\leq 1$ for all $k$. 
The same also holds for the $Z_k$. 
Since the $ \Pb_k$ are mean-zero, $\E\sum_{k=1}^nY_k =0$ and 
$\E\sum_{k=1}^nZ_k =0$. 
We have 
\begin{eqnarray*}
\P(|\frac{1}{n}\sum_{k=1}^{n}(Y_k+iZ_k)| > t)&\leq& \P(|\frac{1}{n}\sum_{k=1}^{n}Y_k|>\frac{t}{\sqrt{2}}\;\cup\; |\frac{1}{n}\sum_{k=1}^{n}Z_k|>\frac{t}{\sqrt{2}} )\\
&\leq & \P(|\frac{1}{n}\sum_{k=1}^{n}Y_k|>\frac{t}{\sqrt{2}})+\P( |\frac{1}{n}\sum_{k=1}^{n}Z_k|>\frac{t}{\sqrt{2}} )\\
&\leq &  4e^{-t^2 \frac{n}{4}},
\end{eqnarray*}
where the last inequality is Hoeffding's inequality.

Lastly, 
\begin{eqnarray*}
\E |\frac{1}{n}\sum_{k=1}^{n}(Y_k+iZ_k)|^m&=&\int_0^\infty ms^{m-1}\P (|\frac{1}{n}\sum_{k=1}^{n}(Y_k+iZ_k)|\geq s)ds\\
&\leq & 4 m\int_0^\infty s^{m-1} e^{-\frac{s^2 n}{4}}ds\\
&=&4m \int_0^\infty2^{m-1} \left(\frac{t}{n}\right)^{(m-1)/2}e^{-t}(nt)^{-1/2}dt\\
&=&m 2^{m+1}n^{-m/2}\int_0^\infty t^{\frac{m}{2}-1}e^{-t}dt\\
&=&m2^{m+1}n^{-m/2}\Gamma(\frac{m}{2}-1).
\end{eqnarray*}
\end{proof}

For the Haar case we use the work of Garnaev and Gluskin~\cite{GG84,Glu84}. 
The probability given in Theorem~3 in \cite{Glu84} combined with the main theorem of \cite{GG84} 
give the following theorem. 
Here $B^m_p$ denotes the unit ball in $l^p(\R^m)$, and $P_H$ denotes the orthogonal projection onto 
the subspace $H$. 
\begin{theorem}[Main Theorem in \cite{GG84}]\label{GG}
Let $H$ be an $m$-dimensional subspace of $\R^n$, $m<n$, with Grassmanian distribution. 
Then there exist absolute constants $c$ and $M$ depending only on the 
ratio of $n$ to $m$ such that with probability $1-e^{-cn}$ 
\begin{equation*}
P_HB^n_2\subset \frac{M}{\sqrt{n}}P_HB^n_\infty.\label{Kashin}
\end{equation*}
\end{theorem}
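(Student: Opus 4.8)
\medskip

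The plan is to reduce the statement, by convex duality, to the familiar form of Kashin's theorem on almost-Euclidean sections, and then to establish that form by a Gaussian net argument with an exponential tail.

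First I would record two elementary facts about polars taken inside the subspace $H$. Since $B^n_2$ is a Euclidean ball and $H$ passes through the origin, $P_H B^n_2=B^n_2\cap H$, the unit Euclidean ball of $H$; and for any symmetric convex body $K$ one has $(P_H K)^\circ=K^\circ\cap H$. Applying the second identity with $K=B^n_\infty$, so that $K^\circ=B^n_1$, and passing to polars, the asserted inclusion $P_H B^n_2\subset\frac{M}{\sqrt n}P_H B^n_\infty$ becomes equivalent to $\frac{\sqrt n}{M}(B^n_1\cap H)\subset B^n_2\cap H$, that is, to the estimate
\begin{equation*}
\|x\|_2\le\frac{M}{\sqrt n}\,\|x\|_1\qquad\text{for all }x\in H .
\end{equation*}
Since $\|x\|_1\le\sqrt n\,\|x\|_2$ holds for every $x$, this says exactly that $\ell_1^n$ and $\sqrt n\,\ell_2^n$ are comparable on the random subspace $H$, which is Kashin's statement; the content beyond the classical existence result is the $e^{-cn}$ control on the exceptional probability.

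Next I would realize $H$ concretely as $H=\mathrm{range}(G)=\{G\theta:\theta\in\R^m\}$, where $G$ is an $n\times m$ matrix with independent standard Gaussian entries; then $H$ has the Grassmanian distribution and $G\theta$ is a standard Gaussian vector of $\R^n$ for each $\theta\in S^{m-1}$. Two ingredients are needed. First, for fixed $\theta\in S^{m-1}$ the random variable $\|G\theta\|_1$ has mean $\sqrt{2/\pi}\,n$ and, being a $\sqrt n$-Lipschitz function of $G\theta\in\R^n$, satisfies $\P(\|G\theta\|_1\le\tfrac12\sqrt{2/\pi}\,n)\le e^{-c_0 n}$ by Gaussian concentration. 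Second, $\|G\|_{\mathrm{op}}\le 3\sqrt n$ outside an event of probability $e^{-c_1 n}$, using $m<n$. Take a $\delta$-net $\mathcal N$ of $S^{m-1}$ with $|\mathcal N|\le(3/\delta)^m$; by a union bound, outside an event of probability $(3/\delta)^m e^{-c_0 n}+e^{-c_1 n}$ one has $\|G\theta_0\|_1\ge\tfrac12\sqrt{2/\pi}\,n$ for every $\theta_0\in\mathcal N$ together with $\|G\|_{\mathrm{op}}\le 3\sqrt n$. For arbitrary $\theta\in S^{m-1}$, pick $\theta_0\in\mathcal N$ with $\|\theta-\theta_0\|_2\le\delta$; then $\|G\theta\|_1\ge\|G\theta_0\|_1-\sqrt n\,\|G(\theta-\theta_0)\|_2\ge\tfrac12\sqrt{2/\pi}\,n-3n\delta$, which exceeds $c_2 n$ once $\delta$ is a fixed small constant, while $\|G\theta\|_2\le\|G\|_{\mathrm{op}}\le 3\sqrt n$. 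Hence $\|G\theta\|_2\le\frac{3/c_2}{\sqrt n}\,\|G\theta\|_1$, which is the required estimate with $M=3/c_2$, and one then selects $c>0$ so that $(3/\delta)^m e^{-c_0 n}+e^{-c_1 n}\le e^{-cn}$.

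The main obstacle is precisely this last balancing step: the net contributes a factor $e^{m\log(3/\delta)}$, so the crude argument just sketched produces a positive $c$ only while $m/n$ stays small, whereas the theorem is needed for all ratios $n/m$, with $c$ and $M$ depending on that ratio alone. Reaching ratios $m/n$ up to $1$, and recovering the sharp constants actually proved by Garnaev and Gluskin, requires replacing the net and union bound either by Kashin's original iterated decomposition or by a Gaussian comparison inequality (Gordon's minimax, or a Chevet-type bound) applied directly to $\inf_{\theta\in S^{m-1}}\|G\theta\|_1$ and $\sup_{\theta\in S^{m-1}}\|G\theta\|_2$; this refinement is what is imported from \cite{GG84,Glu84}. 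Even so, the net argument above already captures the essential point: Lipschitz concentration of $\|\cdot\|_1$ on a Gaussian vector, set against the exponential cardinality $(3/\delta)^m$ of a net on $S^{m-1}$, suffices to drive the failure probability down to $e^{-cn}$, and the more delicate tools are needed only to make the constants uniform in, and sharp as functions of, the ratio $n/m$.
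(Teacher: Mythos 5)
The paper does not prove Theorem~\ref{GG}; it imports it verbatim from the literature, describing it as the combination of Theorem~3 of \cite{Glu84} with the main theorem of \cite{GG84}, so there is no in-paper argument for you to match. Judged on its own terms, your duality reduction is correct: $P_HB^n_2=B^n_2\cap H$, and since for a symmetric convex body $K$ the polar of $P_HK$ taken inside $H$ is $K^\circ\cap H$, the stated inclusion is indeed equivalent to $\|x\|_2\le\frac{M}{\sqrt n}\|x\|_1$ for all $x\in H$, which is Kashin's almost-Euclidean-sections statement with exponential control of the exceptional event.

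The concentration-and-net argument you then sketch is sound but, as you yourself flag, does not prove the theorem as stated. The requirement that $3\delta$ be a small fraction of $\sqrt{2/\pi}$ forces $\log(3/\delta)$ to exceed a fixed positive constant, while the union bound needs $m\log(3/\delta)<c_0n$; these are incompatible once $m/n$ exceeds a small absolute threshold. Your attribution of the missing step to Kashin's iterated-halving construction or to a Gaussian comparison is appropriate, but note that the naive Gordon/Chevet bound $\E\min_{\theta\in S^{m-1}}\|G\theta\|_1\gtrsim n\bigl(\sqrt{2/\pi}-\sqrt{m/n}\bigr)$ also becomes vacuous before $m/n$ reaches $1$, so the full range of ratios really does require the sharper machinery of \cite{GG84} and \cite{Glu84} rather than a one-line comparison.

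One observation worth adding: the only place the paper invokes Theorem~\ref{GG} is in the proof of Lemma~\ref{ldK}, and there it is applied with $m=2$ and $n$ arbitrarily large. In that regime the net has bounded cardinality $(3/\delta)^2$, so your union bound does yield uniform constants $c$ and $M$ with an $e^{-cn}$ tail. In other words, your ``crude'' argument already covers everything the paper actually needs from Theorem~\ref{GG}, even though it falls short of the theorem at full strength.
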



\begin{lemma}\label{ldK}
Let $\Pb$ be a mean-zero random diagonal matrix with independent entries in $[-1,1]$ and of dimension $n$.
Then there exists a constant $C_m$ such that
for dimension $n$ and all $1\leq i,j\leq n$, 
\begin{equation*}
 \E_{\Pb,\Ub} |[U^*\Pb U]_{i,j}|^m\leq C_m  n^{-m/2}.
\end{equation*}
The constant $C_m$ increases with $m$. 
\end{lemma}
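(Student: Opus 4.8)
My plan is to follow the structure of the proof of Lemma~\ref{ld}, using Hoeffding's inequality for the randomness of $\Pb$, but with exact distributional properties of the columns of a Haar unitary playing the role that the flatness $|F_{j,k}|=n^{-1/2}$ played for the DFT. I would treat the off-diagonal and the diagonal entries of $U^*\Pb U$ separately, since they behave differently.

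Write $u_i=Ue_i$, $u_j=Ue_j$ for the $i$-th and $j$-th columns of $U$, so $[U^*\Pb U]_{i,j}=\langle \Pb u_j,u_i\rangle$; since $\Pb$ is diagonal with $|\Pb_{k,k}|\le 1$, the vector $a:=\Pb u_j$ satisfies $\|a\|_2\le\|u_j\|_2=1$. For $i\ne j$: Haar measure is invariant under permuting columns, so $(u_i,u_j)$ has the law of the first two columns, and hence, conditionally on $(\Pb,u_j)$, the vector $u_i$ is uniform on the unit sphere of $u_j^{\perp}$ and independent of $a$. Then $\langle a,u_i\rangle=\langle P_{u_j^{\perp}}a,u_i\rangle$ has the distribution of $\|P_{u_j^{\perp}}a\|_2$ times the first coordinate $w_1$ of a uniform unit vector $w$ in $\C^{n-1}$; since $|w_1|^2$ is $\mathrm{Beta}(1,n-2)$, $\E|w_1|^m=\Gamma(1+\tfrac m2)\Gamma(n-1)/\Gamma(n-1+\tfrac m2)\le C_m n^{-m/2}$, and multiplying by $\|P_{u_j^{\perp}}a\|_2^m\le 1$ and taking expectations over $(\Pb,u_j)$ gives the claim for $i\ne j$ with no logarithmic loss.

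For $i=j$ we have $[U^*\Pb U]_{i,i}=\sum_{k}\Pb_{k,k}|U_{k,i}|^2$, and here I would condition on the column $u_i$ (equivalently on $(|U_{1,i}|^2,\dots,|U_{n,i}|^2)$, which is uniform on the simplex and independent of $\Pb$) and apply Hoeffding exactly as in Lemma~\ref{ld}, now with increments $2|U_{k,i}|^2$, to get $\P_{\Pb}\big(|[U^*\Pb U]_{i,i}|>t\mid U\big)\le 2\exp\!\big(-t^2/(2\sum_k|U_{k,i}|^4)\big)$ and hence $\E_{\Pb}|[U^*\Pb U]_{i,i}|^m\le C_m\big(\sum_k|U_{k,i}|^4\big)^{m/2}$ with the same $C_m=m2^{m/2}\Gamma(m/2)$ as there. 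It then remains to bound $\E_U\big(\sum_k|U_{k,i}|^4\big)^{m/2}$. For this I would use $|U_{k,i}|^2\stackrel{d}{=}E_k/S$ with $E_1,\dots,E_n$ i.i.d.\ exponential and $S=\sum_kE_k$, so the quantity equals $\E\big[(\sum_kE_k^2)^{m/2}S^{-m}\big]$; on $\{S\ge n/2\}$, whose complement has probability $\le e^{-cn}$ by a Chernoff bound, one has for $m\ge 2$ by convexity $(\sum_kE_k^2)^{m/2}S^{-m}\le 2^mn^{-m}\cdot n^{m/2-1}\sum_kE_k^m$, of expectation $2^m\Gamma(m+1)n^{-m/2}$, while on the complement $\sum_k|U_{k,i}|^4\le 1$ contributes only $e^{-cn}$ (the case $m=1$ follows from Jensen, since $\E_U\sum_k|U_{k,i}|^4=\tfrac{2}{n+1}$). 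Thus $\E_U\big(\sum_k|U_{k,i}|^4\big)^{m/2}\le C_m'n^{-m/2}$. Combining the two cases and enlarging $C_m$ to $\max_{1\le l\le m}C_l$ to make it monotone proves the lemma.

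The main obstacle is the diagonal case: for the DFT $\sum_k|F_{k,i}|^4=n^{-1}$ identically, whereas $\sum_k|U_{k,i}|^4$ is a genuine random variable and the naive deterministic bound $\sum_k|U_{k,i}|^4\le\|u_i\|_\infty^2$ is only $O(\log n/n)$ — not enough for a constant $C_m$ independent of $n$. What is really needed is the concentration of $\sum_k|U_{k,i}|^4$ around $\tfrac{2}{n+1}$, which the exponential representation supplies; this is the Haar-specific input replacing the exact flatness used in Lemma~\ref{ld}. (The Garnaev--Gluskin estimate of Theorem~\ref{GG} gives uniform $\ell^1$--$\ell^2$ control on random subspaces that is convenient elsewhere, but the $n^{-m/2}$ bound here is cleanest obtained directly.)
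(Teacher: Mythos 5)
Your argument is correct, but it takes a genuinely different route from the paper's. The paper's proof conditions on the event that the relevant columns $\vec{u}_i,\vec{u}_j$ are $\ell^\infty$-flat, invoking the Garnaev--Gluskin theorem (Theorem~\ref{GG}) to control the bad event by $e^{-cn}$, and then runs the same Hoeffding-over-$\Pb$ argument as in Lemma~\ref{ld} with increments bounded by $M^2/n$; the diagonal case $i=j$ is not treated separately. You instead exploit the exact distributional structure of Haar columns: for $i\neq j$ you condition on $(\Pb,u_j)$ and use rotational invariance to reduce $|\langle \Pb u_j,u_i\rangle|$ to the modulus of one coordinate of a uniform vector on the unit sphere of $\C^{n-1}$, giving $\E|w_1|^m=\Gamma(1+\tfrac m2)\Gamma(n-1)/\Gamma(n-1+\tfrac m2)\le C_m n^{-m/2}$ directly; for $i=j$ you condition on $u_i$, apply Hoeffding only in the randomness of $\Pb$ to get the variance proxy $\sum_k|U_{k,i}|^4$, and then control its $m/2$-th moment via the exponential representation of the Dirichlet distribution. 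Your treatment of the off-diagonal case is notably cleaner: it requires no high-probability truncation and produces a sharp $n^{-m/2}$ with explicit $C_m$. Your separate handling of the diagonal is also well-motivated, since there $\Pb_{k,k}|U_{k,i}|^2$ is a genuine sum of $n$ independent terms and one needs second-moment (rather than $\ell^\infty$) control of $u_i$; as you observe, the naive pointwise $\|u_i\|_\infty^2$ bound would cost a $\log n$ factor. In this respect your proof buys a tighter, truncation-free estimate, whereas the paper's approach via Theorem~\ref{GG} is designed to run the DFT and Haar cases through essentially the same Hoeffding template. One small bookkeeping remark: in the proof of Lemma~\ref{convergence} the paper goes on to bound fourth moments of \emph{products} of entries $T^{(1)}_{i^*,l_1}\cdots T^{(k+1)}_{s_k,i^*}$ via H\"older, which only ever uses Lemma~\ref{ldK} with fixed $(i,j)$ pairs, so your lemma plugs in without change.
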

\begin{proof}
We set $T_{ij}=[U^*\Pb U]_{i,j}$. 
We take $c$ to be the constant given 
by Theorem~\ref{GG} for a random $2$-dimensional subspace of a larger dimension $m_0$ for some large $m_0$. 
Since $c$ is non-decreasing as $2/n\rightarrow 0$, we may use the decay constant for an 
arbitrary large $m_0$. 
$M$ is also the constant given by Theorem~\ref{GG} for a random $2$-dimensional subspace of a larger dimension $m_0$.  
Denote by $\vec{u}_i$ the $i^{th}$ column of $U$. 
\begin{eqnarray*}
\lefteqn{\E_{\Pb,\Ub} |T_{i,j}|^m}\nonumber\\
&=&\E[|T_{i,j}|^m\big|\; \|\vec{u}_i\|_\infty\leq M \;\textnormal{ and } \|\vec{u}_j\|_\infty\leq M   ]\cdot \P\Big(\big|\; \|\vec{u}_i\|_\infty\leq M \;\textnormal{ and } \|\vec{u}_j\|_\infty\leq M        \Big)\nonumber\nonumber\\
&+&\E[|T_{i,j}|^m| \big|\; \|\vec{u}_i\|_\infty>M \;\textnormal{ or } \|\vec{u}_j\|_\infty> M  ]\cdot \P\Big(\big|\; \|\vec{u}_i\|_\infty> M \;\textnormal{ or } \|\vec{u}_j\|_\infty > M   \Big)\nonumber\\
&\leq & \E[|T_{i,j}|^m|\big|\; \|\vec{u}_i\|_\infty\leq M \;\textnormal{ and } \|\vec{u}_j\|_\infty\leq M]+e^{-cn}.\nonumber
\end{eqnarray*}
Set $Y_k=n\Re \Pb_{k,k} U^{*}_{i,k}U_{k,j}$ and $Z_k= n\Im\Pb_{k,k}U^*_{i,k}U_{k,j}$.  
The $Y_k$, $k=1,...,n$, are independent random variables,
and $-M^2\leq Y_k\leq M^2$ for all $k$. 
The same also holds for the $Z_k$. 
Since the $ \Pb_k$ are mean-zero, $\E\sum_{k=1}^nY_k =0$ and 
$\E\sum_{k=1}^nZ_k =0$. 
Now we may repeat the calculation used in the proof of Lemma~\ref{ld} keeping in mind the factor $M^2$. 
We incorporate $e^{-cn}$ into $Cn^{-m/2}$. 
\end{proof}

We again use $V$ to 
represent either a DFT matrix or a random unitary matrix.  
We then define the matrix $\Wn=\Pn\Vn$; however, in what follows 
we will not write the subscript $n$. 
We denote the $i^{th}$ column of $\Wn$ denoted $\wi$, 
and define the following quantity. 
\begin{eqnarray}
A_{i}&=&I+zPVQ\Vstar P-zQ_{i,i}\wi\wi^*\label{defAi}\\
&=&I+z\sum_{j\neq i}Q_{j,j}\wj\wj^*\label{defA}.
\end{eqnarray}

\begin{lemma}\label{convergence}
In both the DFT and the Haar cases, for $|z|<1$ the random variable $\wi^*\Aii\wi$ defined in equation~(\ref{defA}) equals a deterministic constant $D(z) $ independent of the dimension $n$ plus a 
random part that depends on the dimension and that converges almost surely to $0$ with respect to dimension 
independent of the index $i$. That is, in dimension $n$, $\wi^*\Aii\wi=D(z)+X_{n,i}$, where $D(z)$ is independent of $n$ and 
\begin{equation*}
\P(|X_{n,i}|>\epsilon)= \mathcal{O}(\epsilon^{-4}n^{-2}). 
\end{equation*}
\end{lemma}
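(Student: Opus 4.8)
The plan is to expand $\wi^*\Aii\wi$ using the Neumann series for $\Aii$, valid when $|z|<1$ because then $\|z\sum_{j\neq i}Q_{j,j}\wj\wj^*\|$ is controlled (each $\wj$ has norm $\le 1$ in the DFT case and concentrates near a fixed value in the Haar case, so the operator norm is $O(1)$ and less than $1/|z|$ for $|z|$ small enough). Writing $\Aii = \sum_{k\geq 0}(-z)^k (PVQ\Vstar P - Q_{i,i}\wi\wi^*)^k$, we get
\begin{equation*}
\wi^*\Aii\wi = \sum_{k\geq 0}(-z)^k\, \wi^*\bigl(PVQ\Vstar P - Q_{i,i}\wi\wi^*\bigr)^k\wi.
\end{equation*}
Each term is a finite sum of monomials in the entries of $V$, $P$ and $Q$. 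The idea is that, after taking expectations, each such monomial either factors into a product of the ``typical'' quantities ($P_{k,k}$, $Q_{k,k}$ averaging to $1-p$, $1-q$, diagonal entries of $V^*V=I$, etc.) giving the deterministic constant $D(z)$, or it contains an off-diagonal factor $[V^*\Pb V]_{a,b}$-type object of the kind bounded in Lemmas~\ref{ld} and~\ref{ldK}, contributing only lower-order fluctuations.

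The concrete steps I would carry out are: (1) Fix $z$ with $|z|<1$ and establish the Neumann expansion above, with a tail bound showing the remainder after $K$ terms is $O(|z|^{K})$ uniformly in $n$ and $i$, so it suffices to analyze finitely many terms for each prescribed accuracy — or, better, keep the full series and bound it termwise. (2) For each fixed $k$, decompose $\wi^*(PVQ\Vstar P - Q_{i,i}\wi\wi^*)^k\wi$ into its mean (over $P$, $Q$, and $V$ in the Haar case) plus a centered remainder; the mean, summed over $k$, defines $D(z)$, which by construction depends only on $p$, $q$, $z$ and not on $n$ or $i$ (all index-dependence washes out by the rotational/translational symmetry of $F$ and the unitary invariance of Haar measure). (3) Bound the variance of $X_{n,i}$: expand $\E|X_{n,i}|^2$ and $\E|X_{n,i}|^4$ as sums of products of entries of $V$, and use Lemma~\ref{ld} / Lemma~\ref{ldK} together with $|V_{a,b}|\le 1/\sqrt n$ (DFT) or the moment bounds on columns of a Haar unitary (Haar) to show $\E|X_{n,i}|^4 = O(n^{-2})$. (4) Apply Markov's inequality, $\P(|X_{n,i}|>\epsilon)\le \epsilon^{-4}\E|X_{n,i}|^4 = O(\epsilon^{-4}n^{-2})$, which is the claimed bound; summability of $n^{-2}$ over $n$ then gives almost sure convergence by Borel--Cantelli, uniformly in $i$ since the implied constant does not depend on $i$.

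The main obstacle I anticipate is step (3): the fourth-moment computation. The quantity $\wi^*\Aii\wi$, once expanded, is a complicated polynomial in correlated entries of $V$ (the same $\wi$ appears repeatedly, and $\Aii$ itself depends on all the $\wj$), so one must organize the combinatorics of which index patterns survive after taking expectations and show that every surviving pattern carries at least two ``off-diagonal'' factors, each contributing a factor $n^{-1/2}$ via Lemmas~\ref{ld}--\ref{ldK}, to reach the $n^{-2}$ rate. Handling the $Q_{i,i}\wi\wi^*$ correction term inside the powers, and controlling the Neumann tail uniformly in $n$, are secondary technical points; the Haar case additionally needs the conditioning-on-good-columns trick already used in the proof of Lemma~\ref{ldK} so that the sub-Gaussian/Hoeffding-type bounds apply. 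Once the fourth moment is shown to be $O(n^{-2})$ with an $i$-independent constant, the rest is routine.
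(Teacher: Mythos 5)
Your plan matches the paper's proof in all essentials: expand $\wi^*\Aii\wi$ in a Neumann series, truncate with a uniform tail bound, center $P$ and $Q$ so that a deterministic $n$-independent constant $D(z)$ splits off from a fluctuation, bound the fourth moment of the fluctuation via Lemmas~\ref{ld} and~\ref{ldK}, and finish with Markov and Borel--Cantelli. One detail worth tightening in your step (3): the paper obtains the $n^{-2}$ rate not from ``two off-diagonal factors each $n^{-1/2}$'' (which would only give $n^{-1}$), but by noting that a degree-$k$ term in $\E|X_{n,i}|^4$ carries $4(k+1)$ off-diagonal factors of $[V^*\bar P V]$, contributing $O(n^{-2(k+1)})$ by H\"older, while the number of index tuples surviving the $\bar Q$-expectation is $O(n^{2k})$, so the product is $O(n^{-2})$ regardless of $k$; the paper also simplifies the bookkeeping you flag as the ``main obstacle'' by first observing that the $-zQ_{ii}\wi\wi^*$ correction is equivalent to forcing $Q_{i^*i^*}=0$ and then centering $Q$ and $P$ in succession.
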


\begin{proof}
We arbitrarily select an index $i$ and denote it $i^*$. 
If $|z|<1$, then for any realization of $P_n$ and $Q_n$ every entry of the 
following sum converges:
\begin{eqnarray*}
W^*A^{-1}_{i^*} W&=&V^{*}P(I+zPVQV^{*}P-zQ_{\iistar} \wis\wiss)^{-1}PV\\
&=&V^{*}P  \sum_{k=0}^{\infty}(-z)^k  (PVQV^{*}P-zQ_{\iistar} \wis\wiss)^k PV.
\end{eqnarray*}
For $|z|<1$, for any $\delta$ we may choose $K$ such that 
\begin{equation*}
\big|[V^{*}P  \sum_{k=K+1}^{\infty}(-z)^k  (PVQV^{*}P)^k PV]_{\iistar}\big|<\delta
\end{equation*}
for any realization of $Q$, independent of $n$. 
For now we just take $K$ to be a large integer. 
Observe that equation~(\ref{defAi}) is equivalent to 
requiring the random matrix $Q$ to have a deterministic zero at  the entry $\iistar$. 
Therefore we set $\Qt_{i,i}=Q_{i,i}-(1-q)$ for $i\neq i^*$ and $\Qt_{i^*,i^*}=-(1-q)$.
Note that $PV(1-q)I\Vstar P=(1-q) P$. 
For a fixed $K$ we now consider 
\begin{eqnarray*}
\lefteqn{V^{*}P  \sum_{k=0}^{K}(-z)^k  (PVQV^{*}P-zQ_{\iistar} \wis\wiss)^k PV}\\
&=&\sum_{k=0}^{K}(-z)^{k}\Vstar P[PV\Qt\Vstar P+(1-q)P]^{k}PV\label{sumtoK}\\
&=&\sum_{k=0}^{K}(-z)^{k}\sum_{j=0}^{k}\binom{k}{j}(1-q)^{j}\Vstar P[PV\Qt\Vstar P]^{k-j}P^{j} P V\nonumber\\
&=&\sum_{k=0}^{K}(-z)^{k}\sum_{j=0}^{k}\binom{k}{j}(1-q)^{j}\Vstar P[V\Qt\Vstar P]^{k-j} V.\nonumber\label{dec20a}
\end{eqnarray*}
Now we center the $P$ matrices. 
Set $\Pt=P-(1-p)I$. 
Then 
\begin{eqnarray*}
\Vstar P[V\Qt\Vstar P]^{k} V
&=& \Vstar (\Pt+(1-p)I)[V\Qt\Vstar (\Pt+(1-p)I)]^{k} V\\
&=&\sum_\alpha(1-p)^{k+1-|\alpha|}\Vstar \Pt^{\alpha_{0}}V\Qt \Vstar  \Pt^{\alpha_{1}} V\Qt\cdots\Pt^{\alpha_k}V\label{aaaa}
\end{eqnarray*}
for $\alpha_0,...,\alpha_k$ equaling $0$ or $1$ and $|\alpha|$ equalling the sum of the $\alpha_i$'s.
In the case that all $\alpha_i$'s are $0$, we recall that $\Qtistar=-(1-q)$, and thus  
the $\iistar$ entry of~(\ref{aaaa}) is deterministic in that case. 
Thus, the $\iistar$ entry of~(\ref{aaaa}) equals a constant independent of $n$ plus a linear combination 
of the $\iistar$ entries of matrices of the form 
\begin{equation*}
\Vstar \Pt^{\alpha_{0}}V\Qt \Vstar  \Pt^{\alpha_{1}} V\Qt\cdots\Pt^{\alpha_k}V
\end{equation*}
with $\alpha_i\neq 0$ for at least one $i$. 
We continue to center each random diagonal matrix in this way such that eventually we only 
have constant terms, independent of $n$, and terms of the form 
\begin{equation}
\Vstar \Pb^{(1)} V\Qb^{(1)}\cdots \Vstar \Pb^{(k)} V\Qb^{(k)}\Vstar  \Pb^{(k+1)} V\label{dec20b}
\end{equation}
for some centered matrices (except for the $\iistar$-entry) $\Pb^{(1)},....,\Pb^{(k+1)} $ 
and $\Qb^{(1)}... \Qb^{(k)}$ and some $1<k\leq K$. 
Note that the dimension $n$ plays no role in these expansions.  
Thus the term in equation~(\ref{dec20a}) has a deterministic part independent of the dimension 
and a random part that is a sum of terms of the form~(\ref{dec20b}). 
The number of such terms depends only on $K$; call this quantity $K_1$.

We set $T^{(i)}=\Vstar \Pb^{(i)} V$ and consider a term of the form~(\ref{dec20b}). 
\begin{eqnarray}
&&\E|(T^{(1)}  \Qb^{(1)}\cdots  T^{(k)}  \Qb^{(k)}T^{(k+1)})_{\iistar}|^4\label{mom4}\\
&&\hspace{.5cm}=\sum_{l,m,r,s}\{\E \Qb^{(1)}_{l_{1}}\cdots \Qb^{(k)}_{l_{k}}\Qb^{(1)}_{m_{1}}\cdots \Qb^{(k)}_{m_{k}}\Qb^{(1)}_{r_{1}}\cdots \Qb^{(k)}_{r_{k}}\Qb^{(1)}_{s_{1}}\cdots \Qb^{(k)}_{s_{k}}\}\nonumber\\
&&\hspace{.5in}\times\E T^{(1)}_{i^*,l_{1}}\cdots T^{(k+1)}_{l_{k},i^*}T^{(1)}_{i^*,m_{1}}\cdots T^{(k+1)}_{m_{k},i^*}T^{(1)}_{i^*,r_{1}}\cdots T^{(k+1)}_{r_{k},i^*}T^{(1)}_{i^*,s_{1}}\cdots T^{(k+1)}_{s_{k},i^*}\}\nonumber .
\end{eqnarray}

By Lemma~\ref{ld}, $\E_{Pn,Q_n} |T^{(l)}_{i,j}|^m\leq C_m n^{-(m+1)/2}$ in the DFT case 
for $1\leq i,j\leq n$. 
Lemma~\ref{ldK} delivers the same bound for  $\E_{\Ubn,Pn,Q_n} |T^{(l)}_{i,j}|^m$ in the 
Haar case. 

\begin{eqnarray*}
\lefteqn{\E|T^{(1)}_{i^*,l_{1}}...T^{(k+1)}_{l_{k},i^*}T^{(1)}_{i^*,m_{1}}...T^{(k+1)}_{m_{k},i^*}T^{(1)}_{i^*,r_{1}}...T^{(k+1)}_{r_{k},i^*}T^{(1)}_{i^*,s_{1}}...T^{(k+1)}_{s_{k},i^*}|}\\
&\leq &(\E|T^{(1)}_{i^*,l_{1}}|^{4(k+1)}\cdots\E|T^{(k)}_{l_{k},i^*}|^{4(k+1)}\cdots\cdots\E|T^{(1)}_{i^*,s_{1}}|^{4(k+1)}\cdots\E|T^{(k)}_{s_{k},i^*}|^{4(k+1)})^{1/(4(k+1))}\nonumber\\
&\leq& C_{4(k+1)}n^{-(4(k+1))/2}\nonumber\\
&\leq&  C_{K_1}n^{-(2k+2)}.
\end{eqnarray*}
We now  bound 
\begin{equation}
\sum_{l,m,r,s}\E \Qb^{(1)}_{l_{1}}\cdots \Qb^{(k)}_{l_{k}}\Qb^{(1)}_{m_{1}}\cdots \Qb^{(k)}_{m_{k}}\Qb^{(1)}_{r_{1}}\cdots \Qb^{(k)}_{r_{k}}\Qb^{(1)}_{s_{1}}\cdots \Qb^{(k)}_{s_{k}}.\label{qq}
\end{equation}
The expectations in line~(\ref{qq}) are all less 
than or equal $1$, so the summability is solely a question of how many terms in the sums there are. 
Since the $\Qb^{(i)}$'s are independent and centered, the expectation of a product of 
$\Qb_{i}$'s is zero if there is not at least the square of each term or the non-zero term $\Qb^{(i)}_{\iistar}$. 
Regardless of whether $i^*$ is an index in an expectation, the number of possible other indices is at 
most $2k$, 
and for $j=1,...,2k$, there are nonzero expectations for $j$ terms different from $i^*$. 
Once $j$ integers out of $\{1,...,n\}$  are chosen, the number of ways to assign them to $4k$ positions is 
independent of $n$. 
Call this number $C_{j,4k}$. 
The number of ways to choose $j$ different numbers out of $n$ is $\binom{n}{j}\leq n^{j}$.
Set $C'_k=2\max_{1\leq j\leq 2k}C_{j,2k}$. 
Then the term in line~(\ref{qq}) is less than or equal to
\begin{eqnarray*}
  \sum_{j=1}^{2k}C_{j,4k}n^{j}&\leq &2k\max_{1\leq j\leq 2k}C_{j,4k} n^{2k}\\
&\leq &K_1C'_{K_1}n^{2k}.
\end{eqnarray*}
Then 
\begin{eqnarray}
\lefteqn{\P(\big|\textnormal{ sum of all terms of the form~(\ref{dec20b})}\big|>\epsilon)}\hspace{4cm}\nonumber\\
&\leq& K_1\cdot \P(| \textnormal{ one such term }|>\frac{\epsilon}{K_1})\nonumber\\
&=&  K_1\cdot \P\left(| \textnormal{ one such term }|^4>\left(\frac{\epsilon}{K_1}\right)^4\right)\nonumber\\
&\leq &K_1 \cdot ( K_1^4\epsilon^{-4}  C_{K_1}n^{-(2k+2)})\cdot ( K_1 C'_{K_1}n^{2k})\nonumber\\
&=& K_1^6\epsilon^{-4}  C_{K_1}  C'_{K_1}n^{-2}    .\label{boundonprob}
\end{eqnarray}
Since the terms~(\ref{boundonprob}) are summable with respect to $n$, 
the Borel-Cantelli lemma implies that the sum of the random terms 
\begin{equation*}
\sum_{l\in \{1,...,n\}^k} T^{(1)}_{i,l_{1}}\Qb^{(1)}_{l_{1}}\cdots\Qb^{(k)}_{l_{k}}T^{(k+1)}_{l_{k+1},i}   
\end{equation*}
converges almost surely to $0$ for all $k$ as $n \rightarrow \infty$. 

This argument is independent of which index we denote $i^*$, and so the convergence 
is almost sure to the same constant independent of the placement of the zero. 

By taking the limit as the dimension implicit in line~(\ref{sumtoK}) 
tends to infinity, we obtain the constant $D_K(z)$. 
Then $\{D_k(z)\}_{k=1}^\infty$ is a Cauchy sequence, and, if we denote its limit $D(z)$, then we have 
$|D_k(z)-D(z)|\leq \frac{|z|^k}{1-|z|}$. 

To show the almost sure convergence we let $\epsilon>0$ be given. 
Choose $K$ large enough so that $ \frac{|z|^K}{1-|z|}\leq \epsilon/4$ and 
$|D_K(z)-D(z)|\leq \epsilon/4$. 
In the following calculation the matrices are of dimension $n$, as indexed by the sum. 
Using (\ref{boundonprob}) we have 
\begin{eqnarray*}
\lefteqn{\sum_{n=1}^\infty \P(|[W^{*}\Aii W]_{ii}-D(z)|>\epsilon)}\hspace{2cm}\\
&=& \sum_{n=1}^\infty \P(|[V^{*}P\sum_{k=0}^{\infty}(-z)^k  (PVQV^{*}P)^k PV  ]_{ii}-D(z)|>\epsilon)\\
&\leq & \sum_{n=1}^{\infty} \P(|[V^{*}P\sum_{k=0}^{K}(-z)^k  (PVQV^{*}P)^k PV  ]_{ii}-D_K(z)|\\
&&+|D_K(z)-D(z)| +|[V^{*}P\sum_{k=K+1}^{\infty}(-z)^k  (PVQV^{*}P)^k PV  ]_{ii}   |>\epsilon )\\
&\leq & \sum_{n=1}^{\infty} \P(|[V^{*}P\sum_{k=0}^{K}(-z)^k  (PVQV^{*}P)^k PV  ]_{ii}-D_K(z)|>\epsilon/2)\\
&\leq & \sum_{n=1}^{\infty}2^{4} K_1^6\epsilon^{-4}  C_{K_1}  C'_{K_1}n^{-2} ,  
\end{eqnarray*}
which is finite. 
The Borel-Cantelli lemma now gives the almost sure convergence of $[W^*\Aii W]_{ii}$ 
to $D(z)$ for all $i$. 
\end{proof}

The following lemma is essentially due to Tulino, Verd\'u, Caire and Shamai, 
and was developed in their work on (deterministic) Toeplitz matrices conjugated 
by a random Bernoulli projection matrix, which they call 'erasure matrices'~\cite{TVCS07}. 
The manipulations and the insight concerning the term $[W^*\Aii W]_{ii}$ are theirs. 
However, $[W^*\Aii W]_{ii}$ has a different form in the work presented here.   
As a consequence, the proof of Lemma~\ref{implicit} requires 
the preceding lemmas, which are our own. 
The proof given here is also self-contained. 
Equation~(\ref{fixed}) certainly holds more broadly than just the case 
covered in~\cite{TVCS07} and the work presented here. 
Similar general settings where such equations hold are proved in~\cite{TCSV10}.

\begin{lemma}\label{implicit}
The $\eta$-transforms of $\Pn\Fn\Qn\Fnstar\Pn$ and $\Pn\Ubn\Qn\Ubnstar\Pn$ converge almost surely to the 
same function, which we denote $\etapq$. 
This function is a solution to the 
implicit equation 
\begin{equation}
\etapq(z)=\eta_{Q}\left(z-z\frac{p}{\etapq(z)}\right)\label{fixed},
\end{equation}
where $\eta_Q$ is the asymptotic $\eta$-transform of $\Qn$. 
\end{lemma}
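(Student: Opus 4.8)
The plan is to compute the asymptotic $\eta$-transform by the resolvent/leave-one-out method, using Lemma~\ref{convergence} as the key concentration input. Write $W=PV$ (for $V$ either the DFT or a Haar unitary), so that $PVQ\Vstar P = \sum_{i=1}^n Q_{i,i}\wi\wi^*$. By definition of the $\eta$-transform applied to the (random) empirical spectral distribution of $\Pn\Vn\Qn\Vnstar\Pn$,
\begin{equation*}
\eta_n(z)=\frac{1}{n}\tr\bigl(I+zPVQ\Vstar P\bigr)^{-1}=\frac{1}{n}\sum_{i=1}^n\bigl[(I+zPVQ\Vstar P)^{-1}\bigr]_{i,i}.
\end{equation*}
First I would apply the Sherman–Morrison rank-one perturbation formula to peel off the $i$-th term $Q_{i,i}\wi\wi^*$ from the sum inside the inverse. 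With $A_i$ as defined in~(\ref{defAi})–(\ref{defA}), one gets the standard identity
\begin{equation*}
\bigl[(I+zPVQ\Vstar P)^{-1}\bigr]_{i,i}=\frac{[\Aii]_{i,i}}{1+zQ_{i,i}\,\wi^*\Aii\wi},
\end{equation*}
which is the manipulation attributed to Tulino–Verd\'u–Caire–Shamai. The point of isolating $\wi^*\Aii\wi$ is precisely that Lemma~\ref{convergence} tells us this quantity equals a deterministic $D(z)$, independent of $n$ and of $i$, up to a random error that vanishes almost surely (with summable tail probabilities, so Borel–Cantelli applies uniformly over $i$).

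Next I would identify $D(z)$ in terms of $\eta_n(z)$ itself. Averaging $[\Aii]_{i,i}$ over $i$ should, by the same resolvent bookkeeping (noting $\Aii$ differs from $(I+zPVQ\Vstar P)^{-1}$ only by a rank-one correction whose normalized trace is $O(1/n)$), reproduce $\eta_n(z)$ in the limit; and the diagonal $P_{i,i}$ structure together with the Bernoulli law of $Q$ lets me replace $Q_{i,i}$ by its average effect. Concretely, substituting the Sherman–Morrison expression into $\eta_n(z)=\frac1n\sum_i[\cdot]_{i,i}$, replacing $\wi^*\Aii\wi$ by $D(z)$, and using that a fraction $1-q$ of the indices have $Q_{i,i}=1$ while the rest have $Q_{i,i}=0$ (and that only the indices in the support of $P$ contribute), one is led to an algebraic relation tying $\eta(z):=\lim_n\eta_n(z)$ to $D(z)$ and the limiting law of $Q$. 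Matching this against the series expansion~(\ref{sumforeta}) — or, more cleanly, against the known scalar self-consistent equation for a Toeplitz-type matrix conjugated by an erasure matrix — should force $D(z)=\bigl(1-\eta_{p,q}(z)\bigr)\big/\bigl(z\,\eta_{p,q}(z)\big)$ type expression, which upon back-substitution yields the stated fixed-point equation $\etapq(z)=\eta_Q\!\left(z-z\,p/\etapq(z)\right)$. I would also need to check that both the DFT and Haar cases land on the \emph{same} $D(z)$: this is exactly what Lemma~\ref{convergence} guarantees, since the combinatorial expansion there was dimension-free and blind to which unitary $V$ was used.

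The main obstacle I anticipate is the bookkeeping that turns the per-index identity into the closed scalar equation: one must justify interchanging the $n\to\infty$ limit with the average $\frac1n\sum_i$, control the correlation between $[\Aii]_{i,i}$ and the denominator $1+zQ_{i,i}\wi^*\Aii\wi$ (they are not independent, but the denominator concentrates on a constant by Lemma~\ref{convergence}, so a Cauchy–Schwarz / deterministic-equivalent argument suffices), and verify that the rank-one difference between $\Aii$ and the full resolvent is negligible in normalized trace. I would also be careful about the region of $z$: the manipulations of~(\ref{defAi})–(\ref{defA}) and the Neumann series in Lemma~\ref{convergence} are valid for $|z|<1$, so I would establish~(\ref{fixed}) first on that disc and then extend to $\C^+$ by analyticity of the $\eta$-transform (each $\eta_n$ is analytic off the negative real axis and the family is locally bounded, so a normal-families argument upgrades almost-sure pointwise convergence on $|z|<1$ to convergence of analytic functions, and the identity~(\ref{fixed}) propagates). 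Establishing that $\etapq$ is well-defined (the fixed-point equation has a unique solution in the relevant class) I would defer, since the statement only claims $\etapq$ \emph{is a} solution.
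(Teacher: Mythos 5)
Your overall plan --- resolvent / leave-one-out with Lemma~\ref{convergence} as the concentration input, followed by a self-consistent equation for $D(z)$ --- is the same general strategy as the paper, but two of the load-bearing steps in your sketch are wrong or missing, and they are exactly where the stated fixed-point equation comes from.

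First, the per-diagonal-entry Sherman--Morrison formula you write,
\begin{equation*}
\bigl[(I+zPVQ\Vstar P)^{-1}\bigr]_{i,i}\;\stackrel{?}{=}\;\frac{[\Aii]_{i,i}}{1+zQ_{i,i}\,\wi^*\Aii\wi},
\end{equation*}
is not an identity here. From the rank-one update~(\ref{68}) the $(i,i)$ entry is $[\Aii]_{i,i}-\frac{zQ_{i,i}}{1+zQ_{i,i}\wi^*\Aii\wi}(\Aii\wi)_i(\wi^*\Aii)_i$, and the cross-term $(\Aii\wi)_i(\wi^*\Aii)_i$ does not factor as $[\Aii]_{i,i}\cdot\wi^*\Aii\wi$. (You may be thinking of the Schur-complement formula for a \emph{minor}, which does not apply to a rank-one perturbation of this type.) The paper sidesteps this entirely by never looking at individual diagonal entries: it multiplies~(\ref{68}) by $zQ_{i,i}\wi\wi^*$, sums over $i$, and uses the elementary identity $\tr((I+M)^{-1})=1-\tr(M(I+M)^{-1})$ to obtain the clean trace relation
\begin{equation*}
\tr((I+zPVQ\Vstar P)^{-1})=\frac{1}{n}\sum_{i=1}^{n}\frac{1}{1+zQ_{i,i}\wi^*\Aii\wi},
\end{equation*}
which, with Lemma~\ref{convergence}, yields $\etapq(z)=\eta_Q(zD(z))$. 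Your extra $[\Aii]_{i,i}$ in the numerator would not average away and would spoil this.

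Second, the complementary relation that identifies $D(z)$ is exactly the step you defer to ``bookkeeping,'' and the formula you guess, $D(z)=(1-\etapq(z))/(z\,\etapq(z))$, is not the right one --- it should be $D(z)=1-p/\etapq(z)$. The paper gets it from a structural fact about $W=PV$: since the $i$-th row and column of $PVQ\Vstar P$ vanish whenever $P_{i,i}=0$, the resolvent has $[(I+zPVQ\Vstar P)^{-1}]_{i,i}=1$ on those indices, giving
\begin{equation*}
\tr\bigl(\Ws(I+zWQ\Ws)^{-1}W\bigr)=\tr\bigl((I+zWQ\Ws)^{-1}\bigr)-\frac{1}{n}\sum_{i=1}^n 1\{P_{i,i}=0\}.
\end{equation*}
Combined with the other Sherman--Morrison consequence $\wi^*(I+zWQ\Ws)^{-1}\wi=\frac{\wi^*\Aii\wi}{1+zQ_{i,i}\wi^*\Aii\wi}$ and Lemma~\ref{convergence}, this gives $D(z)\etapq(z)=\etapq(z)-p$ in the limit, and substitution into $\etapq=\eta_Q(zD)$ produces~(\ref{fixed}). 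Your proposal to ``average $[\Aii]_{i,i}$ over $i$'' would at best recover $\eta_n$ up to a negligible rank-one correction; it cannot produce the $-p$ shift, because that comes from the projection $P$ on the \emph{outside} of $PVQ\Vstar P$, not from the rank-one bookkeeping. Without that observation the parameter $p$ never enters, so the plan cannot reach the claimed equation. The rest of your scaffolding (Borel--Cantelli uniformly over $i$, the two ensembles landing on the same $D(z)$ via Lemma~\ref{convergence}, and analytic continuation beyond $|z|<1$) is fine and matches the paper.
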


\begin{proof}
We return to the set-up given in equations~(\ref{defAi}) and~(\ref{defA}).
We again at first let $V$ stand for an arbitrary unitary matrix.  
Recall that $\Wn=\Pn\Vn$ and that  $\wi$ is the $i^{th}$ column of $\Wn$. 
As defined in (\ref{defAi}) and~(\ref{defA})
\begin{eqnarray*}
A_{i}&=&I+zPVQ\Vstar P-zQ_{i,i}\wi\wi^*\\
&=&I+z\sum_{j\neq i}Q_{j,j}\wj\wjs.
\end{eqnarray*}
$A_{i}$ is invertible for $z\notin [-1,0]$, and one can verify directly that 
\begin{equation}
(I+zPVQ\Vstar P)^{-1}=A_{i}^{-1}-\frac{zQ_{i,i}}{1+zQ_{i,i}\wis\Aii\wi}\Aii\wi\wis\Aii.\label{68}
\end{equation}
Now we multiply both sides of equation~(\ref{68}) by $zQ_{i,i}\wi\wis$ and obtain
\begin{eqnarray*}
\lefteqn{zQ_{i,i}\wi\wis(I+zPVQ\Vstar P)^{-1}}\\
&=&zQ_{i,i}\wi\wis\Aii-\frac{z^2Q^2_{i,i}\wi\wis}{1+zQ_{i,i}\wis\Aii\wi}\Aii\wi\wis\Aii\nonumber\\
&=&zQ_{i,i}\wi\wis\Aii(1-\frac{zQ_{i,i}\wis\Aii\wi}{1+zQ_{i,i}\wis\Aii\wi})\\
&=&\frac{zQ_{i,i}}{1+zQ_{i,i}\wis\Aii\wi}\wi\wis\Aii.
\end{eqnarray*}
Summing over $i$ gives
\begin{eqnarray}
z PVQ\Vstar P (I+zPVQ\Vstar P)^{-1}&=&\sum_{i=1}^{n}zQ_{i,i}\wi\wis (I+zPVQ\Vstar P)^{-1}\\
&=&\sum_{i=1}^{n}\frac{zQ_{i,i}}{1+zQ_{i,i}\wis\Aii\wi}\wi\wis\Aii\label{73}.
\end{eqnarray}
We use the following observation: let $M\in\C^{n\times n}$ be a positive matrix and 
$\lambda_{1},...,\lambda_{n}$ its eigenvalues. 
Then 
\begin{eqnarray*}
\tr((I+M)^{-1})&=&\frac{1}{n}\sum_{i=1}^{n}\frac{1}{1+\lambda_{i}}\\
&=& \frac{1}{n}\sum_{i=1}^{n}\frac{1+\lambda_{i}}{1+\lambda_{i}}-\frac{1}{n}\sum_{i=1}^{n}\frac{\lambda_{i}}{1+\lambda_{i}}\\
&=&1-\tr (M(I+M)^{-1}).
\end{eqnarray*}
Similarly, using equation~(\ref{73}),
\begin{eqnarray*}
\tr((I+zPVQ\Vstar P)^{-1})&=&1-\tr (zPVQ\Vstar P (I+zPVQ\Vstar P)^{-1})\\
\tr((I+zPVQ\Vstar P)^{-1})&=&1-\tr (zQ\Vstar P (I+zPVQ\Vstar P)^{-1}PV^*)\\
&=&1- \tr (\sum_{i=1}^{n}\frac{zQ_{i,i}\wi\wis\Aii}{1+zQ_{i,i}\wis\Aii\wi})\\
&=&1-\frac{1}{n}\sum_{i=1}^{n}\frac{zQ_{i,i}\wis\Aii\wi}{1+zQ_{i,i}\wis\Aii\wi}\\
&=&\frac{1}{n}\sum_{i=1}^{n}\frac{1}{1+zQ_{i,i}\wis\Aii\wi}.
\end{eqnarray*}
We note
\begin{eqnarray}
\lefteqn{\tr(\Vstar P(I+zPVQ\Vstar P)^{-1}PV)}\nonumber\\
&=&\tr ( P(I+zPVQ\Vstar P)^{-1}P)\nonumber\\
&=&\tr((I+zPVQ\Vstar P)^{-1})-\frac{1}{n}\sum_{i=1}^{n}1\{P_{i,i}=0\}[(I+zPVQ\Vstar P)^{-1}]_{i,i}\nonumber\\
&=&\tr((I+zPVQ\Vstar P)^{-1})-\frac{1}{n}\sum_{i=1}^{n}1\{P_{i,i}=0\},\label{87}
\end{eqnarray}
where the last line follows from using that $[(I+zPVQ\Vstar P)^{-1}]_{i,i}=1$ when $P_{i,i}=0$.

Lemma~\ref{convergence} 
states that when $V_n=F_n$ or when it has Haar distribution $\wi^*\Aii\wi$ 
converges almost surely to a number $D(z) $ independent of $i$ as $n\rightarrow \infty$. 
This constant is the same for both the Fourier and the Haar case. 
Writing $\E$ for both $\E_{\Pn,\Qn}$ and $\E_{\Pn,\Qn,\Ubn}$, 
\begin{eqnarray*}
\etapq(z)&=&\lim_{n\rightarrow \infty}\E\;\tr((I+zPVQ\Vstar P)^{-1})\\
&=&\lim_{n\rightarrow\infty}\E\; \frac{1}{n}\sum_{i=1}^{n}\frac{1}{1+zQ_{i,i}\wis\Aii\wi}.
\end{eqnarray*}
Let $C_z$ be the decay constant given in Lemma~\ref{convergence}. 
For each $i$ we write $\wis\Aii\wi=D(z)+X_{n,i}$, where $D(z)$ is the deterministic part independent of dimension, as in Lemma~\ref{convergence}. 
\begin{eqnarray}
\lefteqn{\P (|  \frac{1}{n}\sum_{i=1}^{n}\frac{1}{1+zQ_{i,i}(D(z)+X_{n,i} )}-\eta_Q(zD(z))|>\epsilon)}\hspace{2cm}\nonumber\\
&\leq& \P (|\frac{1}{n}\sum_{i=1}^{n}\frac{1}{1+zQ_{i,i} (D(z)+X_{n,i} )}-\frac{1}{n}\sum_{i=1}^{n} \frac{1}{1+zQ_{i,i}D(z)}|\nonumber\\
&&\hspace{2cm}+|\frac{1}{n}\sum_{i=1}^{n} \frac{1}{1+zQ_{i,i}D(z)}-\eta_Q(D(z)z)|>\epsilon)\nonumber\\
&\leq & \P (|\frac{1}{n}\sum_{i=1}^{n}\frac{1}{1+zQ_{i,i}(D(z)+X_{n,i})}-\frac{1}{n}\sum_{i=1}^{n} \frac{1}{1+zQ_{i,i}D(z)}|>\frac{\epsilon}{2})\nonumber\\
&&\hspace{2cm}+ \P (|\frac{1}{n}\sum_{i=1}^{n} \frac{1}{1+zQ_{i,i}D(z)}-\eta_Q(D(z)z)|>\frac{\epsilon}{2})\label{bernstein1}\\
&\leq & 2 \P (\frac{1}{n}\sum_{i=1}^{n}|\frac{zQ_{i,i}X_{n,i}}{(1+zQ_{i,i}D(z))( 1+zQ_{i,i}(D(z)+X_{n,i})) } | >\frac{\epsilon}{2})\label{bernstein}\\
&\leq & 2^5 \epsilon^{-4}  \frac{1}{n}\sum_{i=1}^{n}\E|\frac{zQ_{i,i}X_{n,i}}{(1+zQ_{i,i}D(z))( 1+zQ_{i,i}(D(z)+X_{n,i})) }    |^4 \\
&\leq & 2^5 C_z C_d C_r n^{-2},\label{lastline}
\end{eqnarray}
where we use the  exponential Bernstein bound for the term in line~(\ref{bernstein1}) 
and incorporate it into the $2$ in line~(\ref{bernstein}). 
For the last inequality~(\ref{lastline}) we use $C_d$ to take care of the denominator, 
which is possible for $z$ in a small enough circle around the origin. 
As in Lemma~\ref{convergence}, we truncate the infinite sum implicit in each $X_{n,i}$  
at some index $K$ and collect the remainder in the term $C_r$. 
The remaining terms in the numerator are all products of terms of the form~(\ref{mom4}) 
with powers summing to $4$, and so the work of Lemma~\ref{convergence} applies. 
We use Lemma~\ref{convergence} for the final  inequality. 
Using the Borel-Cantelli lemma, we now have the almost sure convergence 
\begin{equation*}
 \frac{1}{n}\sum_{i=1}^{n}\frac{1}{1+zQ_{i,i}\wis\Aii\wi}\stackrel{a.s}{\rightarrow } \eta_Q(zD(z)).
\end{equation*}
Using this and Lemma~\ref{convergence} again,  
\begin{eqnarray}
D(z)  \etapq(z)&=&\lim_{n\rightarrow\infty}\E \frac{1}{n}\sum_{i=1}^{n}\frac{\wis\Aii\wi}{1+zQ_{i,i}\wis\Aii\wi}\nonumber\\
&=&\lim_{n\rightarrow\infty}\E \frac{1}{n}\sum_{i=1}^{n}\wis(I+zWQ\Ws)^{-1}\wi\nonumber\\
&=&\lim_{n\rightarrow\infty} \E\tr(\Ws(I+zWQ\Ws)^{-1}W)\nonumber\\
&=&\etapq(z)-p\label{86},
\end{eqnarray}
where equation~(\ref{86}) follows from taking the limit with respect to $n$ 
of the expectation of equation~(\ref{87}).
\end{proof}
\begin{proposition}\label{isequiv}
Let $\Pn$ and $\Qn$ be independent Bernoulli as defined above, 
with expected traces $1-p\in(0,1)$ and $1-q\in(0,1)$ respectively.
Then the $\eta$-transforms of $\Pn\Fn\Qn\Fnstar\Pn$ and $\Pn\Ubn\Qn\Ubnstar\Pn$ 
converge almost surely 
to the asymptotic $\eta$-transform 
\begin{equation*}
\etapq(z)=\frac{1+(p+q)z+\sqrt{1+(2(p+q)-4pq)z+((p+q)^2-4pq)z^{2}}}{2(1+z)}.
\end{equation*}
\end{proposition}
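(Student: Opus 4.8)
The plan is to feed the implicit equation~\eqref{fixed} of Lemma~\ref{implicit} into an explicit formula for $\eta_Q$ and then solve the resulting quadratic, using analyticity near the origin to select the correct branch of the square root.

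First I would compute $\eta_Q$. Since $\Qn$ is a Bernoulli diagonal projection matrix it is diagonal, so its eigenvalues are exactly its diagonal entries, which are i.i.d.\ and equal to $1$ with probability $1-q$ and to $0$ with probability $q$. By the strong law of large numbers the empirical distribution of these eigenvalues converges almost surely to $(1-q)\delta(x-1)+q\,\delta(x)$, and because the $\eta$-transform is a bounded, continuous functional of the spectral distribution for $z$ in a neighbourhood of the origin, this yields
\begin{equation*}
\eta_Q(w)=q+\frac{1-q}{1+w}.
\end{equation*}

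Next I would substitute this into~\eqref{fixed}. Writing $\eta=\etapq(z)$ and $w=z-zp/\eta$, the relation $\eta=q+(1-q)/(1+w)$ solves for $w$ to give $w=(1-\eta)/(\eta-q)$; equating this with $w=z(\eta-p)/\eta$ and clearing denominators produces the quadratic
\begin{equation*}
(1+z)\,\eta^2-\bigl(1+(p+q)z\bigr)\eta+pq\,z=0 .
\end{equation*}
Its discriminant equals $(1+(p+q)z)^2-4pq\,z(1+z)=1+(2(p+q)-4pq)z+((p+q)^2-4pq)z^2$, so the two roots are
\begin{equation*}
\eta=\frac{1+(p+q)z\pm\sqrt{1+(2(p+q)-4pq)z+((p+q)^2-4pq)z^2}}{2(1+z)} .
\end{equation*}
Here the divisions by $\eta-q$ and by $\eta$ are legitimate on a neighbourhood of $0$, where $\etapq(0)=1$ and $q<1$.

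Finally I would justify the choice of sign. By~\eqref{sumforeta} the limiting $\eta$-transform $\etapq$ is represented near $0$ by $\sum_{k\ge 0}(-z)^k\E[X^k]$ with $X$ supported in $[0,1]$, hence it is analytic on the disc $|z|<1$ with $\etapq(0)=1$. At $z=0$ the radicand equals $1$, and with the square-root convention stated above (argument in $[0,\pi)$) its square root is $1$; thus the ``$+$'' root takes the value $1$ at $z=0$ while the ``$-$'' root takes the value $0$. Therefore $\etapq$ coincides with the ``$+$'' branch on a neighbourhood of $0$, and by analytic continuation the identity persists wherever both sides are analytic, giving the claimed formula. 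The main obstacle is not the algebra, which is routine, but the bookkeeping around branches and domains: one must confirm that the stated square-root convention is consistent along the relevant region, that the ``$+$'' root is genuinely the analytic continuation of the power series for $\etapq$ rather than the other root picked up after a branch crossing, and that no spurious solution of the fixed-point equation~\eqref{fixed} is being selected — and it is precisely the power-series characterisation of $\etapq$ together with its analyticity near $0$ that rules these out.
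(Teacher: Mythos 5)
Your proposal is correct and follows essentially the same route as the paper: compute $\eta_Q(w)=\frac{1+qw}{1+w}$ from the limiting spectral measure of $Q_n$, substitute into the fixed-point equation~(\ref{fixed}) to obtain the quadratic $(1+z)\eta^2-(1+(p+q)z)\eta+pqz=0$, and select the ``$+$'' root by the normalization $\etapq(0)=1$. The only addition beyond the paper's argument is your more explicit discussion of branch selection and analytic continuation, which is a useful elaboration rather than a different approach (and incidentally the paper contains a typo, writing $\eta_Q(z)=\frac{1-qz}{1+z}$ in one line before using the correct $\frac{1+qw}{1+w}$ in the ensuing computation).
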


\begin{proof}
The proof is the same for both types of matrices, so we write it only for the DFT case.
The matrices $\Fn\Qn\Fnstar$ have only eigenvalues $1$ and $0$, and their limiting 
$\eta$-transform is 
\begin{equation*}
\eta_{F Q\Fstar}(z)=\eta_{Q}(z)=\frac{1-qz}{1+z}.
\end{equation*}
Applying equation~(\ref{fixed}) from Lemma~\ref{implicit} yields
\begin{equation*}
\etapq(z)=
\frac{1+q(z-z\frac{p}{\etapq(z)})}{1+(z-z\frac{p}{\etapq(z)})},
\end{equation*}
which leads to the equation
\begin{equation*}
(1+z)\etapq^{2}(z)-(1+(p+q)z)\etapq(z)+pqz=0.
\end{equation*}
This equation has the solutions 
\begin{equation*}
\frac{1+(p+q)z\pm\sqrt{(1+(p+q)z)^2-4(1+z)pqz}}{2(1+z)}.
\end{equation*}
Noting that $\etapq(0)$ must equal $1$, we choose the solution with addition.
We thus have 
\begin{equation}
\etapq(z)=\frac{1+(p+q)z+\sqrt{1+(2(p+q)-4pq)z+(p-q)^2z^{2}}}{2(1+z)}.\label{efqfe}
\end{equation} 

\end{proof}

\end{section}

\begin{section}{Limiting Empirical Distributions}

\begin{theorem}\label{first}
For $i=1,...,n$ let $i$ be contained in $\On$ independently with probability $(1-q)$ and, also independently, 
let $i$ be included in $\Tn$ with probability $(1-p)$. 
Then the empirical distributions of the $\min(|\Tn|,|\On|)$ largest eigenvalues of $\FOnTn\FOnTn^* $ and $\UbOnTn\UbOnTn^* $  converge almost surely to
\begin{small}\begin{equation}
f_{p,q}(x)=\frac{\sqrt{(1-\frac{r_{-}}{x})(\frac{r_{+}}{x}-1)}}{2\pi(1-x)(1-\max(p,q))}\cdot I_{(r_{-},r_{+})}(x)+\frac{\max(0,1-(p+q))}{1-\max(p,q)}\cdot\delta(x-1)\label{distr2}
\end{equation}\end{small}
where
\begin{equation*}
r_{-}=(\sqrt{p(1-q)}-\sqrt{q(1-p)})^2
\end{equation*}
and
\begin{equation*}
r_{+}=(\sqrt{p(1-q)}+\sqrt{q(1-p)})^2.
\end{equation*}
\end{theorem}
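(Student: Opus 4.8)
The plan is to run the Mar\v cenko--Pastur-type argument to its conclusion: use Proposition~\ref{isequiv} together with the identity~(\ref{transforms}) to pass from the $\eta$-transform to the Stieltjes transform of $\Pn\Fn\Qn\Fnstar\Pn$, read off the limiting spectral measure of this $n\times n$ matrix by the inversion formula~(\ref{invstieltjes}), and then translate the result into a statement about the genuine submatrix $\FOnTn$ by removing the zero eigenvalues contributed by the two projections.

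First I would substitute $z\mapsto -1/z$ in the formula of Proposition~\ref{isequiv} and multiply numerator and denominator by $z$; after clearing the $1/z$'s inside the radical this gives, for the almost sure limit of the Stieltjes transforms of both $\Pn\Fn\Qn\Fnstar\Pn$ and $\Pn\Ubn\Qn\Ubnstar\Pn$,
\begin{equation*}
m_{p,q}(z)=\frac{\,z-(p+q)+\sqrt{z^{2}-\a z+\b}\,}{2z(1-z)}.
\end{equation*}
A short identity check (writing $s=p+q$, $t=pq$ and using $(s-2t)^{2}-(p-q)^{2}=4pq(1-p)(1-q)$) shows $z^{2}-\a z+\b=(z-r_{-})(z-r_{+})$, so the square root in $m_{p,q}$ is branched exactly at $r_{-}$ and $r_{+}$, while the poles sit at $0$ and $1$. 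I would then apply~(\ref{invstieltjes}). On $(r_{-},r_{+})$ the radicand is negative, and since $r_{+}\le 1$ — which is just the Cauchy--Schwarz inequality $\sqrt{p}\sqrt{1-q}+\sqrt{1-p}\sqrt{q}\le 1$ — the prefactor $2x(1-x)$ is positive there, so the absolutely continuous part of the limiting law of $\Pn\Fn\Qn\Fnstar\Pn$ has density $\frac{1}{2\pi x(1-x)}\sqrt{(x-r_{-})(r_{+}-x)}$ on $(r_{-},r_{+})$, which is exactly $(1-\max(p,q))$ times the density in~(\ref{distr2}). The only remaining candidate support points are the two poles: using $(1-r_{-})(1-r_{+})=(1-p-q)^{2}$ and $r_{-}r_{+}=(p-q)^{2}$, the residue of $m_{p,q}$ at $z=1$ is $-\max(0,1-(p+q))$ and the residue at $z=0$ is $-\max(p,q)$, so these are the masses of atoms at $1$ and at $0$. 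Thus $\Pn\Fn\Qn\Fnstar\Pn$ has limiting spectral distribution $\max(p,q)\,\delta_{0}+(1-\max(p,q))\,f_{p,q}$, and one verifies the total mass is $1$ (equivalently, the standard integral of the density over $(r_{-},r_{+})$ equals $\min(p,q)$ when $p+q\le 1$ and $1-\max(p,q)$ otherwise).

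Finally I would pass to $\FOnTn$. Since $\Pn\Fn\Qn\Fnstar\Pn=(\Pn\Fn\Qn)(\Pn\Fn\Qn)^{*}$ and $\Pn\Fn\Qn$ is just $\Fn$ with the rows outside $\Tn$ and the columns outside $\On$ set to zero, its nonzero singular values coincide with those of the submatrix $\FOnTn$; so the $n\times n$ matrix $\Pn\Fn\Qn\Fnstar\Pn$ consists of the nonzero eigenvalues of $\FOnTn\FOnTn^{*}$ padded with zeros. By the strong law $\tfrac1n\min(|\Tn|,|\On|)\to 1-\max(p,q)$ almost surely, and the computation above shows the limiting proportion of zero eigenvalues of $\Pn\Fn\Qn\Fnstar\Pn$ is precisely $\max(p,q)$, so the $\min(|\Tn|,|\On|)$ largest eigenvalues of $\FOnTn\FOnTn^{*}$ differ from its nonzero eigenvalues by only $o(n)$ many zeros. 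Deleting the atom at $0$ from $\max(p,q)\,\delta_{0}+(1-\max(p,q))\,f_{p,q}$ and renormalizing the remainder by $1/(1-\max(p,q))$ therefore produces exactly~(\ref{distr2}); the argument is verbatim the same with $\Ubn$ in place of $\Fn$, using the Haar half of Proposition~\ref{isequiv}.

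I expect the main obstacle to be the branch bookkeeping for the complex square root in $m_{p,q}$ and in its boundary values as $z$ approaches the real axis — this is precisely what the Note in Section~1 anticipates: getting the signs of $\sqrt{(z-r_{-})(z-r_{+})}$ right near $0$ and near $1$ is what forces the atom at $0$ to have mass $\max(p,q)$ (not $\min(p,q)$) and the atom at $1$ to vanish when $p+q\ge 1$, and it must be cross-checked against $\lim_{z\to+\infty}\etapq(z)=\max(p,q)$. Beyond this, the only genuine care is needed in the degenerate configurations $r_{-}=0$ (i.e.\ $p=q$, where the branch point collides with the pole at $0$) and $r_{+}=1$ (i.e.\ $p+q=1$, where it collides with the pole at $1$ and the density acquires a harmless $(1-x)^{-1/2}$ edge singularity); both are handled by continuity of $m_{p,q}$, of $r_{\pm}$, and of the claimed density in the parameters $(p,q)$.
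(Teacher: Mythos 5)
Your overall strategy is the same as the paper's: pass from the $\eta$-transform of Proposition~\ref{isequiv} to the Stieltjes transform $m_{p,q}$ via~(\ref{transforms}), invert it by~(\ref{invstieltjes}) to get the absolutely continuous part on $(r_-,r_+)$, identify the atoms at $0$ and $1$, then discard the padding zeros and renormalize by $1/(1-\max(p,q))$. The formula $m_{p,q}(z)=\bigl(z-(p+q)+\sqrt{(z-r_-)(z-r_+)}\,\bigr)/\bigl(2z(1-z)\bigr)$ agrees with the paper after the same simplification, and your factorization $z^2-\a z+\b=(z-r_-)(z-r_+)$ and density computation match~(\ref{factors}).

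Where you genuinely diverge is in how the atoms are extracted. The paper gets $\mu(\{0\})=\max(p,q)$ from $\lim_{r\to\infty}\eta_{p,q}(r)$ (a dominated-convergence fact for $\eta$), and gets $\mu(\{1\})$ by a rather lengthy detour: it expands $\eta_{p,q}$ in a moment series, controls $\sum_k (-z)^k\int x^k f\,dx$ via an explicit integral identity (Gradshteyn--Ryzhik 2.221) and a telescoping binomial estimate, and finally lets $z\to -1$ in $(1+z)\eta_{p,q}(z)$. You instead read both atom masses as negatives of residues of $m_{p,q}$ at $z=0$ and $z=1$, which is considerably shorter and conceptually cleaner; it buys you a direct, uniform treatment of both atoms at the price of having to track the branch of $\sqrt{(z-r_-)(z-r_+)}$ as $z\to 0^+$ versus $z\to 1^+$ in $\C^+$. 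You correctly flag this branch bookkeeping as the live issue, and your cross-check against $\lim_{z\to\infty}\eta_{p,q}(z)=\max(p,q)$ (the paper's equation~(\ref{max})) resolves the sign at $0$; the sign at $1$ is forced analogously. Two small things you wave at rather than nail down: (i) the passage from pointwise almost sure convergence of Stieltjes transforms to almost sure weak convergence of the empirical measures (the paper invokes Theorem~2.4.4(c) of~\cite{AGZ09} and notes it extends to a.s.\ convergence); and (ii) the explicit branch computation at $z=0$ and $z=1$, which under the paper's convention $\arg\in[0,\pi)$ does give opposite signs of $\sqrt{\cdot}$ at the two points and hence the asymmetric masses $\max(p,q)$ and $\max(0,1-(p+q))$. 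Neither is a gap in the idea, just a place where the paper's version is more explicit.
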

\begin{figure}[h]\label{plot}
\begin{center}
\includegraphics[width=13cm]{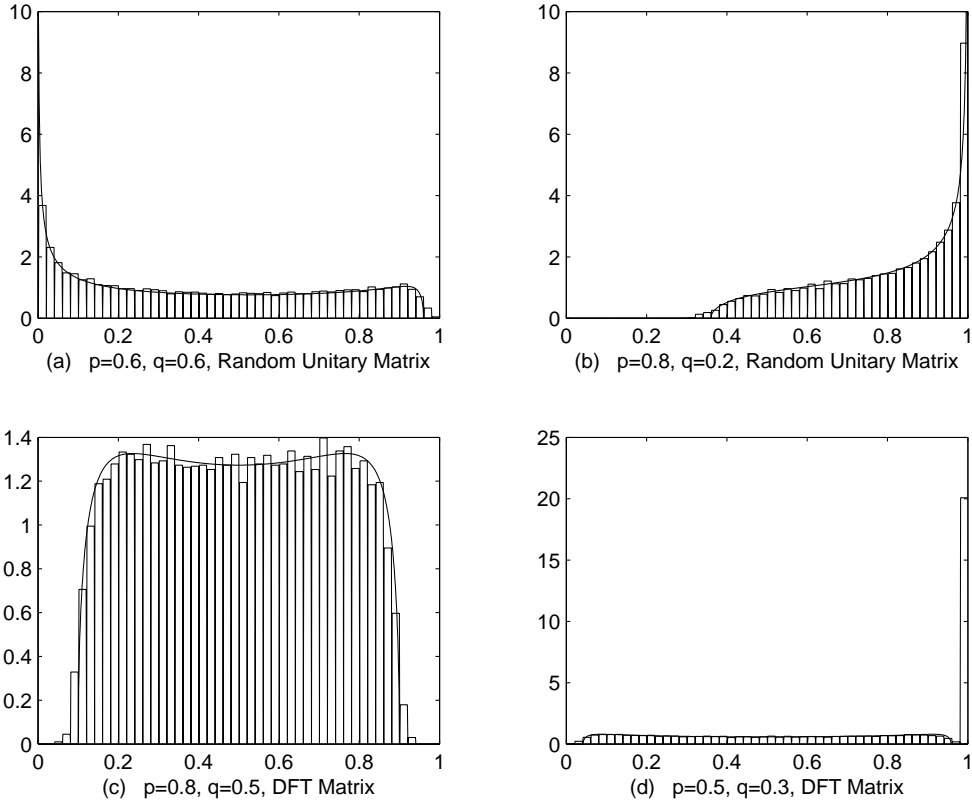}
\caption{Empirical eigenvalue distribution plotted against continuous part of asymptotic distribution. 
In plots (a) and (b) the matrices are submatrices of a random unitary matrix with Haar distribution. 
In plots (c) and (d) the matrices are submatrices of the DFT matrix. 
In each case, the original matrix 
dimension was $100/(1-\max(p,q))$, so that the submatrices all had expected dimensions $100\times 100$. Each random matrix was 
realized $100 $ times.}
\end{center}
\end{figure}
Note that $r_{-}=0$ only when $p=q$; that is, when $\FOnTn\FOnTn^* $ or $\UbOnTn\UbOnTn^* $ 
is asymptotically square. 
Therefore, when $p\neq q$ the support of the limiting distribution begins at $r_{-}>0$.
Also, $r_{+}=1$ only when $p+q=1$, 
so when  $p+q\neq 1$ there is a gap in the 
support of the limiting distribution from $r_{+}$ to $1$. 
When $r_-\neq 0$ and $r_+\neq 1$ the continuous part of the measure begins at $r_{-}$ 
and makes an arc ending at $r_{+}$. 
When $r_-=0$ or $r_+=1$, the continuous part of~(\ref{distr2}) tends to $\infty$ as $x\rightarrow 0$ or $x\rightarrow 1$. 
When $p+q<1$ there is a point mass of measure $1-(p+q)$ at $1$, 
and when $p+q>1$, the support stops at $r_{+}$ and there is no point mass at $1$.
In Figure 1 the continuous part of the asymptotic distribution is plotted against empirical values for several parameter pairs $p$ and $q$. 

We discuss the relationship between Theorem~\ref{first} and uncertainty principles and other areas  before turning to the proof. 
We focus first on the DFT case and assume that the dimension is $n$. 
The norm of a DFT matrix with a set of rows and columns removed equals $1$ if and only if there exists a vector with time support corresponding to 
the remaining columns and frequency support corresponding to the remaining rows. 
That is, denoting $Fx=\hat{x}$, in the notation of this paper, 
$\|\FOT\|=1$ if and only if there exists $x\in\C^n$ such that $\supp(x)\subset T$ and $\supp(\hat{x})\subset \Omega$. 
One is then interested in determining conditions on the cardinality of $|\Omega|$ and $|T|$ such that $\|\FOT \|<1$, 
where generally at least one set is random and a statement is made in some probabilistic form. 
This non-asymptotic question has been studied intensively over the last ten years. 
Recent results and a general discussion can be found in~\cite{Tro08}. 
While Theorem~\ref{first} does not assert the non-existence of any vectors, it 
does state when certain vectors do exist  and 
sheds light on one of the main theorems in this area, 
namely that of Tao. 
\begin{theorem}[Tao, \cite{Tao05}]
If $n$ is prime and  $|\Omega|+|T|<n$, then $\|\FOT\|<1$. 
\end{theorem}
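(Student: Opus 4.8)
The plan is to reduce the operator-norm inequality to the sharp additive uncertainty principle on $\mathbb{Z}/n\mathbb{Z}$, and to deduce that principle from Chebotarev's theorem on the non-vanishing of minors of the prime-order DFT matrix.

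\textbf{Step 1: reformulating $\|\FOT\|=1$.} Since $F$ is unitary, $\FOT$ is obtained from an isometry by discarding rows and columns, so $\|\FOT\|\le 1$, and in finite dimensions this norm is attained: $\|\FOT\|=\|P_\Omega Fx\|$ for some unit vector $x$ with $\supp(x)\subset T$, where $P_\Omega$ is the coordinate projection onto the indices in $\Omega$. If $\|\FOT\|=1$, then since $\|Fx\|=\|x\|=1$ the vector $Fx$ must already be supported on the coordinates indexed by $\Omega$; writing $\hat{x}=Fx$ we conclude $\supp(\hat{x})\subset\Omega$. Hence $\|\FOT\|=1$ forces the existence of a nonzero $x$ with $\supp(x)\subset T$ and $\supp(\hat{x})\subset\Omega$, so that $|\supp(x)|+|\supp(\hat{x})|\le|T|+|\Omega|$. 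It therefore suffices to prove that for $n$ prime and $x\ne 0$ one has $|\supp(x)|+|\supp(\hat{x})|\ge n+1$: this contradicts $|\Omega|+|T|<n$ and yields $\|\FOT\|<1$.

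\textbf{Step 2: the uncertainty inequality from Chebotarev.} The key input is Chebotarev's theorem: if $n$ is prime, every square submatrix of $F_n$ is invertible. Granting this, let $x\ne 0$, put $S=\supp(x)$, $a=|S|$, and suppose toward a contradiction that $b=|\supp(\hat{x})|$ satisfies $a+b\le n$. Then $\hat{x}$ vanishes on a set $R$ of size $n-b\ge a$; fix $R'\subset R$ with $|R'|=a$. Restricting the identity $\hat{x}=Fx$ to the rows in $R'$ and the columns in $S$ gives $F[R',S]\,(x|_S)=0$, where $x|_S$ has all entries nonzero and hence $x|_S\ne 0$. This exhibits the $a\times a$ submatrix $F[R',S]$ as singular, contradicting Chebotarev's theorem. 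Thus $a+b\ge n+1$, as needed, and the theorem follows from Step 1.

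\textbf{Main obstacle.} The entire weight of the argument — and the only place primality of $n$ enters — is Chebotarev's theorem, which is genuinely nontrivial. I would either invoke it as a known result or, to be self-contained, sketch the algebraic proof: write an $a\times a$ minor of $F_n$ (up to a root-of-unity factor) as the determinant $\det(\zeta^{r_j s_k})$ with $\zeta$ a primitive $n$-th root of unity, expand each entry through $\zeta = 1+(\zeta-1)$, and reduce modulo the prime ideal above $n$ generated by $1-\zeta$; the leading contribution is a product of Vandermonde-type factors $\prod_{j<k}(r_k-r_j)$ and $\prod_{j<k}(s_k-s_j)$ divided by factorials, all of which are units modulo $n$ because $n$ is prime, the indices are distinct residues, and $a<n$ in the only case that matters. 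Hence the minor is a nonzero algebraic integer. Everything else is the elementary linear algebra of Steps 1 and 2.
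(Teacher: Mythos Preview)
Your argument is correct and is essentially Tao's original proof: reduce $\|\FOT\|=1$ to the existence of a nonzero vector with prescribed time and frequency supports, and rule that out via the sharp additive uncertainty principle, which in turn follows from Chebotarev's theorem on the nonsingularity of all square minors of the prime-order DFT. Step~1 is clean (the only small remark is that attainment of the operator norm is immediate since we are in finite dimensions, and $\|P_\Omega Fx\|=\|Fx\|$ forces $Fx$ to lie in the range of $P_\Omega$). Step~2 is the standard reduction, and your sketch of the $(1-\zeta)$-adic proof of Chebotarev is the right outline, though in a fully self-contained write-up one would want to be more explicit about why the leading term survives reduction modulo $(1-\zeta)$.

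As for comparison with the paper: the paper does not prove this statement at all. It is quoted from \cite{Tao05} purely as context for the discussion following Theorem~\ref{first}, to contrast the deterministic prime-order uncertainty principle with the asymptotic behavior of the empirical spectrum. So there is no ``paper's own proof'' to compare against; your proposal supplies exactly the argument one would find in Tao's paper.
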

Note that since Tao's theorem requires $n$ to be prime, it precludes the case that $\frac{|\Omega|}{n}+\frac{|T|}{n}=1$. 
Theorem~\ref{first} says that if $\frac{|\On|}{n}+\frac{|\Tn|}{n}\rightarrow 1$, 
then $\|\FOnTn\|$ converges almost surely to $1$, and in fact this also holds for random unitary matrices $\UbOnTn\UbOnTn^* $. 
Almost sure convergence and the existence of infinitely many primes imply
that for all $\epsilon_1,\epsilon_2>0$, there exists a prime $n$ and sets $\On, \Tn\subset \{1,...,n\}$ such that 
$\frac{|\On|}{n}+\frac{|\Tn|}{n} < 1+\epsilon_1$ 
and $\|\FOnTn\|>1-\epsilon_2$. 
In fact, the proportion of subsets for which $\|\FOnTn\|<1-\epsilon_2$ converges to $0$ as $n\rightarrow \infty$. 
Moreover, as $n$ increases, the Haar measure of the set of unitary matrices $U$ such that 
$\|\UOnTn\|>1-\epsilon_2$ for random sets satisfying $\frac{|\On|}{n}+\frac{|\Tn|}{n} < 1+\epsilon_1$ 
also converges to $1$. 

A further area of interest is the smallest eigenvalue of $\FOnTn \FOnTn^*$ or $\UbOnTn\UbOnTn^* $; in particular, one would like to 
bound the smallest eigenvalue away from $0$. 
See~\cite{RV08c} for recent results in the setting of independent matrix entries. 
While Theorem~\ref{first} does not make any statement on when the smallest eigenvalue is strictly positive, 
it does say that if $\frac{|\On|}{|Tn|}\rightarrow 1$, then the smallest eigenvalue of $\FOnTn \FOnTn^*$ and $\UbOnTn\UbOnTn^* $ 
converge to $0$. 
This corresponds to the behavior of square matrices with independent entries, though in that case non-asymptotic bounds away from 
$0$ exist \cite{RV08c}. 

Theorem~\ref{first} and some numerical experiments suggest the obvious conjecture that the largest and smallest eigenvalues converge to the 
edge of the limiting support. 
This would imply that Tao's result gives the general uncertainty principle behavior for DFT and random unitary matrices, and that 
the submatrices that do not have this behavior have measure zero asymptotically. 
As was the case for the Wigner and Wishart distributions, we hope that the limiting empirical distribution is helpful in 
determining the behavior of the extremal eigenvalues. 

We note, lastly, that a potential further step in this direction is 
restricted isometry properties. 
Here one set, say $T_n$, is taken at random and one seeks to bound the extremal eigenvalues of $\FOnTn \FOnTn^*$ or $\UbOnTn\UbOnTn^* $
when $\On$ ranges over all subsets of certain cardinality. 
This property of random matrices is central to compressed sensing and has received enormous attention in recent years; 
see~\cite{Can06ICM} for a recent overview.

\begin{proof}[Proof of Theorem~\ref{first}.]
We give the proof using the notation for the DFT case, but the two are identical. 
We begin by working with $ \Pn\Fn\Qn\Fnstar\Pn$, where  $\Pn$ and $\Qn$ are a sequence of independent Bernoulli
diagonal projection matrices, as defined earlier, 
with expected traces $1-p\in(0,1)$ and $1-q\in(0,1)$ respectively. 
By Proposition~\ref{isequiv} we have 
\begin{equation}
|\frac{1}{n}\sum_{i=1}^n \frac{1}{1+z\lambda_i}-\eta_{p,q}(z)|\stackrel{a.s.}{\longrightarrow}  0\label{sconv}
\end{equation}
for all $z\in \C^+$. 
By applying this to $-\frac{1}{z}$, multiplying both terms in~(\ref{sconv}) 
by $-\frac{1}{z}$ and using equation~(\ref{transforms}), we obtain 
\begin{equation*}
|\frac{1}{n}\sum_{i=1}^n \frac{1}{z-\lambda_i}-m_{p,q}(z)|\stackrel{a.s.}{\longrightarrow} 0\label{mconv}.
\end{equation*}
Thus, the random measures induced by the eigenvalues 
at each dimension $n$ converge almost surely to the probability measure corresponding to $m_{p,q}$ 
\cite{AGZ09} (Theorem 2.4.4(c) in \cite{AGZ09} also holds for almost sure convergence). 
By equation~(\ref{transforms}) the limiting Stieltjes transform is 
\begin{eqnarray*}
m_{p,q}(z)&=&\frac{-1}{z}\cdot\frac{1-(p+q)\frac{1}{z}+\sqrt{1+(2(p+q)-4pq)(\frac{-1}{z})+\b(\frac{1}{z})^{2}}}{2(1-\frac{1}{z})}\nonumber\\
&=&\frac{1-(p+q)\frac{1}{z}+\sqrt{1-\a\frac{1}{z}+\b\frac{1}{z^{2}}}}{2(1-z)},
\end{eqnarray*}
and we are interested in the inverse Stieltjes transform
\begin{equation*}
\lim_{\omega\rightarrow 0^{+}}\frac{1}{\pi}\Im\; m_{p,q}(x+i\omega).
\end{equation*}
$m_{p,q}(x+i\omega)$ is a continuous function of both $x$ and $\omega$ 
for $x\in (0,1)$. 
Thus, using equation~(\ref{invstieltjes}) for $x\in(0,1)$,
\begin{eqnarray}
f_{p,q}(x)&=&\frac{1}{\pi}\Im\;\frac{1-(p+q)\frac{1}{x}+\sqrt{1-\a\frac{1}{x}+\b\frac{1}{x^{2}}}}{2(1-x)}\nonumber\\
&=&\Im\;\frac{\sqrt{1-\a\frac{1}{x}+\b\frac{1}{x^{2}}}}{2\pi(1-x)}\label{withIm}.
\end{eqnarray}
Imitating Mar\v cenko and Pastur \cite{MP67}, 
we denote the roots of the equation $x^2-\a x+\b=0$
by $r_{-}$ and $r_{+}$. 
These values are
\begin{eqnarray*}
r_{-}&=&\frac{\a-\sqrt{\a^2-4\b}}{2}\nonumber\\
&=&\frac{\a-2\sqrt{(p+q-2pq-(p-q))(p+q-2pq+(p-q))}}{2}\nonumber\\
&=&\frac{\a-2\sqrt{(2q(1-p))(2p(1-q))}}{2}\nonumber\\
&=&p(1-q)+q(1-p)-2\sqrt{p(1-q)q(1-p)}\nonumber\\
&=&(\sqrt{p(1-q)}-\sqrt{q(1-p)})^2
\end{eqnarray*}
and $r_{+}=(\sqrt{p(1-q)}+\sqrt{q(1-p)})^2$,
as defined in the statement of the theorem.
For $x\in(0,1)$ equation~(\ref{withIm}) is now 
\begin{eqnarray}
f_{PFQ\Fstar P}(x)&=&\Im\;\frac{\sqrt{1-\a\frac{1}{x}+\b\frac{1}{x^{2}}}}{2\pi(1-x)}\nonumber\\
&=&\Im\frac{\sqrt{(1-\frac{r_{-}}{x})(1-\frac{r_{+}}{x}  )}}{2\pi(1-x)}\label{factors}\\
&=&\frac{\sqrt{(1-\frac{r_{-}}{x})(\frac{r_{+}}{x} -1 )}}{2\pi(1-x)}\cdot I_{[r_{-},r_{+}]}(x).\nonumber
\end{eqnarray}

We now determine the density at $x=0$,
for which we need to find the asymptotic proportion of zero eigenvalues of 
$\Pn\Fn\Qn\Fnstar \Pn$ or $\Pn\Ubn\Qn\Ubnstar \Pn$. 
This proportion is given by $\lim_{r\rightarrow\infty}\etapq(r)$, 
since the latter quantity gives the measure of the set ${0}$ with respect 
to the measure $f_{p,q}$. 
We have 
\begin{eqnarray}
\lim_{r\rightarrow \infty}\etapq(r)&=&\lim_{r\rightarrow\infty}\frac{1+(p+q)r+\sqrt{1+\a r +\b r^2}}{2(1+r)}\nonumber\\
&=&\frac{(p+q)+|p-q|}{2}\nonumber\\
&=&\max(p,q)\label{max}.
\end{eqnarray}

Lastly, we investigate the point $x=1$. 
We denote by $\mu(1)$ the measure of the set $\{1\}$ 
with respect to the limiting distribution. 
Let $X$ denote the random variable of the eigenvalues.
We first must address the convergence of $\sum_{k=0}^{\infty}(-z)^{k}\E X^{k}$. 
If $p+q \neq 1$ and $p\neq q$, then  $0<r_{-}$ and $r_{+}<1$, and
\begin{eqnarray*}
\E X^k &\leq &\|\EXP I_{[r_{-},r_{+}]}\|_{\infty} \cdot\int_0^{r_+}r_{+}^{k}dx+\mu(1)\nonumber\\
&=&\|\EXP I_{[r_{-},r_{+}]}\|_{\infty}\cdot r_{+}^{k+1}+\mu(1).
\end{eqnarray*}

 If $p+q\neq 1$ and $p=q$, which implies $r_{-}=0$, then we pick a small $\epsilon$ and have 
\begin{eqnarray*}
\E X^k &\leq & \int_0^{\epsilon}\epsilon^k  \EXP dx\\
&& +\|\EXP I_{[\epsilon,r_{+}]}\|_{\infty} \cdot\int_0^{r_+}r_{+}^{k}dx+\mu(1).
\end{eqnarray*}

When $p+q=1$ and when $z$ belongs to a region of convergence to be determined shortly, 
equation~(\ref{sumforeta}) implies 
\begin{eqnarray}
\etapq(z)&=&\sum_{k=0}^{\infty}(-z)^{k}\E X^{k}\label{etasum}\nonumber\\
&=&\sum_{k=0}^{\infty}(-z)^{k}\int_{r_{-}}^{r_{+}}x^{k}\EXP dx+\sum_{k=0}^{\infty}\mu(1)(-z)^{k}\label{pointsum}\nonumber\\
&=&\sum_{k=0}^{\infty}(-z)^{k}\int_{r_{-}}^{r_{+}}x^{k}\EXP dx+\frac{\mu(1)}{1+z}.
\end{eqnarray}
Since we have assumed that neither $p$ or $q$ equals $1$ or $0$, 
we have $r_{-}<1$.
We show that the sum on the left side  of equation~(\ref{pointsum}) converges for 
$|z|\leq 1/r_{-}$. 
We define   $\lfloor k\rfloor_e=\Big\{\begin{array}{ll} k&k\;\textnormal{even}\\k-1&k\;\textnormal{odd} \end{array}$. 
\begin{eqnarray}
\lefteqn{\left|\sum_{k=0}^{\infty}(-z)^{k}\int_{r_{-}}^{1}x^{k}\EXP dx\right|}\nonumber\\
&\leq& \sum_{k=0}^{2}|z|^k\E X^k+\sum_{k=3}^{\infty}|z|^{k}\int_{r_{-}}^{1}x^{k}\frac{\sqrt{\frac{1}{x}-1}}{2\pi(1-x)}dx\nonumber\\
&=&\sum_{k=0}^{2}|z|^k\E X^k+\sum_{k=3}^{\infty}|z|^{k}\int_{r_{-}}^{1}x^{k}\frac{\sqrt{\frac{1-x}{x}}}{2\pi(1-x)}dx\nonumber\\
&=&\sum_{k=0}^{2}|z|^k\E X^k+\frac{1}{2\pi}\sum_{k=3}^{\infty}|z|^{k}\int_{r_{-}}^{1}x^{k-1/2}(1-x)^{-1/2}dx\nonumber\\
&\leq&\sum_{k=0}^{2}|z|^k\E X^k+\frac{|z|}{2\pi}\sum_{k=2}^{\infty}|z|^{k}\int_{r_{-}}^{1}x^{k}(1-x)^{-1/2}dx\nonumber\\
&\leq&\sum_{k=0}^{2}|z|^k\E X^k+\frac{|z|}{2\pi}\sum_{k=2}^{\infty}|z|^{k}\int_{r_{-}}^{1}x^{\lfloor k\rfloor_e}(1-x)^{-1/2}dx\label{trickyintegral}\\
&=&\sum_{k=0}^{2}|z|^k\E X^k+\frac{|z|}{2\pi}\sum_{k=2}^{\infty}|z|^{k}2(1-r_{-})^{1/2}\sum_{j=0}^{\lfloor k\rfloor_e}\frac{(-1)^{j}\binom{\lfloor k\rfloor_e}{j}(1-r_{-})^{\lfloor k\rfloor_e-j}}{2(\lfloor k\rfloor_e-j)+3}.\nonumber
\end{eqnarray}
The integral in line~(\ref{trickyintegral}) is given by equation 2.221 in \cite{GR00}. 
One may verify that 
\begin{equation*}
\frac{\binom{n}{k}(1-r_+)^{n-k}}{2(n-k)+3}-\frac{\binom{n}{k-1}(1-r_+)^{n-k+1}}{2(n-k+1)+3}< 
\binom{n}{k}(1-r_+)^{n-k}-\binom{n}{k-1}(1-r_+)^{n-k+1},\nonumber
\end{equation*}
so that 
\begin{equation*}
\sum_{j=0}^{\lfloor k\rfloor_e}\frac{(-1)^{j}\binom{\lfloor k\rfloor_e}{j}(1-r_{-})^{\lfloor k\rfloor_e-j}}{2(\lfloor k\rfloor_e-j)+3}
\leq \sum_{j=0}^{\lfloor k\rfloor_e}(-1)^{j}\binom{\lfloor k\rfloor_e}{j}(1-r_{-})^{\lfloor k\rfloor_e-j}=r_{-}^{\lfloor k\rfloor_e}.
\end{equation*}
Thus
\begin{equation*}
(\ref{trickyintegral})\leq\sum_{k=0}^{2}|z|^k\E X^k+\frac{|z|}{2\pi}(1-r_{-})^{1/2}\sum_{k=2}^{\infty}|z|^{-k}r_{-}^{\lfloor k\rfloor_e}\nonumber,
\end{equation*}
which converges for all $|z|<1/r_{-}$. 
This gives 
\begin{equation}
\mu(1)=(1+z)\etapq(z)-(1+z)\sum_{k=0}^{\infty}z^k\int_{r_{-}}^{r_{+}}x^{k}\EXP dx\label{moments},
\end{equation}
where equation~(\ref{moments}) holds for all values $|z|<1$, and the sum on the right side of 
equation~(\ref{moments}) remains finite as $z\rightarrow -1$.
Convergence of the necessary sums and integrals is now established for all $p,q\in(0,1)$.
Using the equation for $\etapq(z)$ from Proposition~\ref{isequiv}, 
\begin{equation*}
(1+z)\etapq(z)=\frac{1+(p+q)z+\sqrt{1+(2(p+q)-4pq)z+((p+q)^2-4pq)z^{2}}}{2}.
\end{equation*}
Allowing $z\rightarrow - 1$, we have 
\begin{eqnarray}
\mu(1)&=&\frac{1-(p+q)+\sqrt{(1-(p+q))^2}}{2}\nonumber\\
&=&\frac{1-(p+q)+|1-(p+q)|}{2} \label{leftover}.
\end{eqnarray}
When $p+q\geq 1$, equation~(\ref{leftover}) is equal to $0$, 
and when $p+q<0$, equation~(\ref{leftover}) is equal to $1-\min(p,q)$.
From equations~(\ref{max}) and~(\ref{leftover}), it follows that 
when $p+q> 1$
\begin{equation*}
\int_{r_{-}}^{r_{+}}\EXP dx =1-\max(p,q),
\end{equation*}
and when $p+q\leq 1$,
\begin{equation*}
\int_{r_{-}}^{r_{+}}\EXP dx =\min(p,q).
\end{equation*}
Now it only remains to remove the point mass at $0$ and normalize the distribution by $1/(1-\max(p,q))$.
\end{proof}

For the following corollary we define the $n$ singular values of the matrix 
$\Pn\Fn\Qn$ to be the (positive) 
square roots of the $n$ eigenvalues of the matrix $\Pn\Fn\Qn\Fnstar\Pn$. 
We thus have the following limiting distribution for the singular values of $\Pn\Fn\Qn$.
\begin{corollary}
For $i=1,...,n$ let $i$ be contained in $\On$ independently with probability $(1-q)$ and, also independently, 
let $i$ be included in $\Tn$ with probability $(1-p)$. 
Then the empirical distributions of the $\min(|\Tn|,|\On|)$ largest singular values of $\FOnTn $ and $\UbOnTn $ converge almost surely to 
\begin{small}\begin{equation*}
f^s_{p,q}(x)=\frac{\sqrt{x^2(1-\frac{r^s_{-}}{x})(\frac{r^s_{+}}{x}-1)}}{2\pi(1-x^2)(1-\max(p,q))}\cdot I_{(r^s_{-},r^s_{+})}(x)+\frac{\max(0,1-(p+q))}{1-\max(p,q)}\cdot\delta(x-1)
\end{equation*}\end{small}
where
\begin{equation*}
r^s_{-}=|\sqrt{p(1-q)}-\sqrt{q(1-p)}|
\end{equation*}
and
\begin{equation*}
r^s_{+}=\sqrt{p(1-q)}+\sqrt{q(1-p)}.
\end{equation*}
\end{corollary}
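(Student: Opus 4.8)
The plan is to derive the corollary from Theorem~\ref{first} by a change of variables, with no new probabilistic estimates. For each $n$, the list of $\min(|\On|,|\Tn|)$ largest singular values of $\FOnTn$ consists precisely of the positive square roots of the $\min(|\On|,|\Tn|)$ largest eigenvalues of $\FOnTn\FOnTn^{*}$, and likewise for $\UbOnTn$; equivalently, up to discarding the trivial zeros, we are looking at the image of the eigenvalue list of $\Pn\Fn\Qn\Fnstar\Pn$ (resp.\ $\Pn\Ubn\Qn\Ubnstar\Pn$) under the map $\lambda\mapsto\sqrt{\lambda}$. Thus the empirical singular value distribution in dimension $n$ is the pushforward of the empirical eigenvalue distribution under this map. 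Since $\lambda\mapsto\sqrt{\lambda}$ is continuous on $[0,\infty)$, it carries weakly convergent sequences of probability measures to weakly convergent ones, realization by realization, so the exceptional null set is unchanged; hence Theorem~\ref{first} gives that the empirical singular value distributions of $\FOnTn$ and $\UbOnTn$ converge almost surely to the pushforward of $f_{p,q}$ under $\lambda\mapsto\sqrt{\lambda}$.

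It then remains only to compute that pushforward explicitly. On the absolutely continuous part one substitutes $\lambda=x^{2}$, $d\lambda=2x\,dx$, so the density $f_{p,q}(\lambda)$ on $(r_{-},r_{+})$ becomes $2x\,f_{p,q}(x^{2})$ on $(\sqrt{r_{-}},\sqrt{r_{+}})$. Because $r_{\pm}=(\sqrt{p(1-q)}\pm\sqrt{q(1-p)})^{2}\ge 0$, one has $\sqrt{r_{-}}=|\sqrt{p(1-q)}-\sqrt{q(1-p)}|=r^{s}_{-}$ and $\sqrt{r_{+}}=\sqrt{p(1-q)}+\sqrt{q(1-p)}=r^{s}_{+}$, which identifies the support interval. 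Writing $x^{2}-r_{-}=(x-r^{s}_{-})(x+r^{s}_{-})$, $r_{+}-x^{2}=(r^{s}_{+}-x)(r^{s}_{+}+x)$ and $1-x^{2}=(1-x)(1+x)$ and simplifying $2x\,f_{p,q}(x^{2})$ then yields the stated continuous density on $(r^{s}_{-},r^{s}_{+})$. Finally, the atom of $f_{p,q}$ at $\lambda=1$ is a fixed point of $\lambda\mapsto\sqrt{\lambda}$, so it is transported to an atom at $x=1$ of the same mass $\frac{\max(0,1-(p+q))}{1-\max(p,q)}$; in particular this mass vanishes exactly when $p+q\ge 1$, which is also when $r^{s}_{+}<1$ and the continuous part stops short of $1$.

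I expect no serious obstacle: the argument is entirely a pushforward of the measure produced by Theorem~\ref{first}, and the only points needing (minor) care are (i) observing that almost-sure weak convergence, not merely convergence in probability, survives composition with the continuous map $\sqrt{\,\cdot\,}$, which is immediate since the set of non-convergence realizations is unchanged, and (ii) performing the routine algebraic simplification of $2x\,f_{p,q}(x^{2})$ into the displayed closed form while keeping the normalization $1/(1-\max(p,q))$ intact. A convenient consistency check is that a pushforward preserves total mass, so the continuous part of $f^{s}_{p,q}$ must carry the same mass as the continuous part of $f_{p,q}$, namely $\min(p,q)/(1-\max(p,q))$ when $p+q\le 1$ and $1$ when $p+q\ge 1$.
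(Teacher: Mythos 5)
Your proof takes exactly the paper's route: it pushes the limiting measure of Theorem~\ref{first} forward through $\lambda\mapsto\sqrt{\lambda}$, applies the change of variables $f^{s}_{p,q}(x)=2x\,f_{p,q}(x^{2})$ to the continuous part, and observes that the atoms at $0$ and $1$ are fixed points. The paper's own proof is just the one-line computation $f^{s}_{p,q}(x)=2x\,f_{p,q}(x^{2})$ starting from equation~(\ref{factors}); your added remark that continuity of $\sqrt{\,\cdot\,}$ carries almost-sure weak convergence through unchanged is a useful explicitation that the paper leaves implicit, so this is the same argument, slightly fleshed out.

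One small factual slip in a non--load-bearing aside: you claim the point mass at $1$ vanishes exactly when $p+q\ge 1$, ``which is also when $r^{s}_{+}<1$.'' That equivalence is wrong: since $r_{+}=(\sqrt{p(1-q)}+\sqrt{q(1-p)})^{2}$, one has $r_{+}=1$ precisely when $p+q=1$ and $r_{+}<1$ whenever $p+q\neq 1$ (both for $p+q>1$ and for $p+q<1$). So $r^{s}_{+}<1$ does not characterise the regime without an atom; for $p+q<1$ the continuous part already stops short of $1$ and the atom fills the gap. This does not affect the validity of the proof, but you should drop or correct that parenthetical.
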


\begin{proof}
The measures for the singular values $0$ and $1$ are the same as the eigenvalues 
$0$ and $1$ of the previous theorem. 
Also, the continuous part of the measure will clearly have support 
$[r^s_{-},r^s_{+}]=[\sqrt{r_{-}},\sqrt{r_{+}}]$.
Using equation~(\ref{factors}), we have 
\begin{eqnarray*}
f^{s}_{p,q}(x)&=&2 xf_{p,q}(x^2)\\
&=&\Im \; 2x \frac{\sqrt{(1-\frac{r_{-}}{x^{2}})(1-\frac{r_{+}}{x^{2}}  )}}{2\pi(1-x^{2})}\nonumber\\
&=&\Im\;x\frac{\sqrt{(1-\frac{r_{-}}{x^2})(\frac{r_{+}}{x^2} -1 )}}{\pi(1-x^2)}\nonumber\\
&=& \frac{\sqrt{x^2(1-\frac{r_{-}}{x^{2}})(\frac{r_{+}}{x^{2}}-1)}}{\pi(1-x^{2})}\cdot I_{[\sqrt{r_{-}},\sqrt{r_{+}}]}(x).
\end{eqnarray*} 
\end{proof}

\end{section}

\noindent\textit{Acknowledgement} 
The author thanks Roland Speicher, Walid Hachem, 
Antonia Tulino and Sergio Verd\'u  for helpful discussions and comments.


\begin{thebibliography}{10}

\bibitem{AGZ09}
G.~Anderson, A.~Guionnet, and O.~Zeitouni, \emph{An introduction to random
  matrices}, Cambridge University Press, Cambridge, 2009.

\bibitem{BY88}
Z.~D. Bai and Y.~Q. Yin, \emph{Necessary and sufficient conditions for almost
  sure convergence of the largest eigenvalue of a {W}igner matrix}, Ann.
  Probab. \textbf{16} (1988), no.~4, 1729--1741.

\bibitem{BY93}
Z.~D. Bai and Y.~Q. Yin, \emph{Limit of the smallest eigenvalue of a large-dimensional sample
  covariance matrix}, Ann. Probab. \textbf{21} (1993), no.~3, 1275--1294.

\bibitem{Can06ICM}
Emmanuel~J. Cand{\`e}s, \emph{Compressive sampling}, International {C}ongress
  of {M}athematicians. {V}ol. {III}, Eur. Math. Soc., Z\"urich, 2006,
  pp.~1433--1452.

\bibitem{GG84}
A.Yu. Garnaev and E.D. Gluskin, \emph{{On widths of the Euclidean ball.}}, Sov.
  Math., Dokl. \textbf{30} (1984), 200--204 (English. Russian original).

\bibitem{Gem80}
Stuart Geman, \emph{A limit theorem for the norm of random matrices}, Ann.
  Probab. \textbf{8} (1980), no.~2, 252--261.

\bibitem{Glu84}
E.D. Gluskin, \emph{{Norms of random matrices and widths of finite-dimensional
  sets.}}, Math. USSR, Sb. \textbf{48} (1984), 173--182 (English).

\bibitem{GR00}
I.S. Gradshteyn and I.M. Ryzhik, \emph{Table of integrals, series, and
  products}, Academic Press, New York, 2000.

\bibitem{GS77}
Ulf Grenander and Jack~W. Silverstein, \emph{Spectral analysis of networks with
  random topologies}, SIAM J. Appl. Math. \textbf{32} (1977), no.~2, 499--519.

\bibitem{MP67}
V.~A. Mar{\v{c}}enko and L.~A. Pastur, \emph{Distribution of eigenvalues in
  certain sets of random matrices}, Mat. Sb. (N.S.) \textbf{72 (114)} (1967),
  507--536.

\bibitem{PR05}
D\'enes Petz and J\'ulia R\'effy, \emph{{Large deviation for the empirical
  eigenvalue density of truncated Haar unitary matrices.}}, Probab. Theory
  Relat. Fields \textbf{133} (2005), no.~2, 175--189 (English).

\bibitem{RV08c}
M.~Rudelson and R.~Vershynin, \emph{The {L}ittlewood-{O}fford problem and
  invertibility of random matrices}, Adv. Math. \textbf{218} (2008), no.~2,
  600--633.

\bibitem{Tao05}
T.~Tao, \emph{An uncertainty principle for cyclic groups of prime order}, Math.
  Res. Lett. \textbf{12} (2005), no.~1, 121--127.

\bibitem{Tro08}
Joel~A Tropp, \emph{On the linear independence of spikes and sines}, Journal of
  Fourier Analysis and Applications \textbf{14} (2008), no.~5, 838--858.

\bibitem{TCSV10}
A.~Tulino, G.~Caire, S.~Shamai, and S.~Verd\'u, \emph{Capacity of channels with
  frequency-selective and time-selective fading}, IEEE Trans. Information
  Theory (2010), to appear.

\bibitem{TV04}
A.~Tulino and S.~Verd\'u, \emph{Random matrices and wireless communications},
  Foundations and Trends in Information Theory \textbf{1} (June 2004), no.~1.

\bibitem{TVCS07}
A.~Tulino, S.~Verd\'u, G.~Caire, and S.~Shamai, \emph{Capacity of the gaussian
  erasure channel}, Preprint, Technion CCIT no. 655 (2007),
  http://www3.ee.technion.ac.il/ccit/info/Publication/Scientific\_e.asps.

\bibitem{Wig55}
Eugene~P. Wigner, \emph{Characteristic vectors of bordered matrices with
  infinite dimensions}, Ann. of Math. (2) \textbf{62} (1955), 548--564.

\bibitem{Wig58}
Eugene~P. Wigner, \emph{On the distribution of the roots of certain symmetric matrices},
  Ann. of Math. (2) \textbf{67} (1958), 325--327.

\bibitem{Wis28}
J.~Wishart, \emph{The generalized product moment distribution in samples from a
  normal multivariate population}, Biometrika \textbf{20A} (1928), 32--52.

\bibitem{YBK88}
Y.~Q. Yin, Z.~D. Bai, and P.~R. Krishnaiah, \emph{On the limit of the largest
  eigenvalue of the large-dimensional sample covariance matrix}, Probab. Theory
  Related Fields \textbf{78} (1988), no.~4, 509--521.

\bibitem{ZS00}
Karol {\.Z}yczkowski and Hans-J{\"u}rgen Sommers, \emph{Truncations of random
  unitary matrices}, J. Phys. A \textbf{33} (2000), no.~10, 2045--2057.

\end{thebibliography}

\end{document}